\documentclass[11pt,a4paper]{article}

\usepackage[T1]{fontenc}
\usepackage[utf8]{inputenc}
\usepackage{amsmath,amsthm,amssymb}
\usepackage{enumitem}
\usepackage[colorlinks=true,linkcolor=blue,citecolor=blue,urlcolor=blue]{hyperref}

\newcommand{\Nil}{\operatorname{Nil}}

\newcommand{\irad}[1]{\sqrt[\mathrm{i}]{#1}}
\newcommand{\grad}[1]{\sqrt[\mathrm{g}]{#1}}
\newcommand{\jrad}[1]{\sqrt[\mathrm{j}]{#1}}

\theoremstyle{plain}
\newtheorem{theorem}{Theorem}[section]
\newtheorem{corollary}[theorem]{Corollary}

\newtheorem{proposition}[theorem]{Proposition}

\newtheorem*{conjecture*}{Conjecture}

\newtheorem{definition}{Definition}[section]

\newtheorem{example}{Example}[section]

\numberwithin{equation}{section}

\title{Strong and Explicit Forms of the One-Sided Nullstellensatz over Division Rings} 
\author{Masood Aryapoor\\
	\tiny{\textit{Department of Business and Mathematics}}\\
	\tiny{\textit{M\"{a}lardalen  University}}\\
	\tiny{\textit{Hamngatan 15, 632 17, Eskilstuna, 
			Sweden
	}}
}
\date{}
\begin{document}
	\maketitle
	\begin{abstract} 
		We establish strong and explicit forms of a one-sided Nullstellensatz for polynomial rings \(D[x_1,\dots,x_n]\) over arbitrary division rings \(D\) with central indeterminates. To this end, we employ the theory of integral dependence over left ideals and the notion of G-left ideals, which plays a role analogous to that of G-ideals in the classical theory. We prove that the Jacobson radical of any left ideal of \(D[x_1,\dots,x_n]\) coincides with the intersection of all G-left ideals containing it. As a consequence, we obtain an explicit description of Jacobson radicals of left ideals of polynomial rings over division rings in terms of integral dependence.
	\end{abstract}
	\section{Introduction}
	From an algebraic perspective, the classical Nullstellensatz  admits three standard forms over a field \(k\). The weak form characterizes the maximal ideals of the polynomial ring \(k[x_1,\dots,x_n]\):
	\begin{theorem}[Weak Form of Classical Nullstellensatz] 
		For every maximal ideal \(M\) of \(k[x_1,\dots,x_n]\), the quotient ring \(k[x_1,\dots,x_n]/M\) is a finite field extension of \(k\). 
	\end{theorem}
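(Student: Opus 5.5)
The plan is to reduce the statement to Zariski's lemma: if a field \(L\) is finitely generated as a \(k\)-algebra, then \(L\) is a finite extension of \(k\). Set \(L=k[x_1,\dots,x_n]/M\). Since \(M\) is maximal, \(L\) is a field, and it is generated as a \(k\)-algebra by the images \(\bar x_1,\dots,\bar x_n\). So it suffices to prove Zariski's lemma, and I will do this by induction on the number \(n\) of algebra generators.

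For \(n=0\) there is nothing to prove. For the inductive step, write \(L=k[a_1,\dots,a_n]\) and consider the intermediate field \(K=k(a_1)\subseteq L\). Then \(L=K[a_2,\dots,a_n]\) is a field generated over \(K\) by \(n-1\) elements. By the induction hypothesis, \(L\) is finite over \(K\). It therefore remains to show that \(a_1\) is algebraic over \(k\); once that holds, \(K\) is finite over \(k\), and transitivity of degrees gives \([L:k]<\infty\).

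Suppose instead that \(a_1\) is transcendental over \(k\), so that \(K\cong k(t)\) is a rational function field. Each \(a_i\) with \(i\ge 2\) is algebraic over \(K\). Clearing denominators in their minimal polynomials, we find a single nonzero \(f\in k[a_1]\) such that every \(a_i\) is integral over the ring \(k[a_1][1/f]\). Then \(L=k[a_1,\dots,a_n]\) is integral over \(k[a_1][1/f]\). Since \(L\) is a field, the standard lemma applies: if a field is integral over a subring, that subring is itself a field (an inverse \(b^{-1}\in L\) satisfies an integral equation, which can be solved for \(b^{-1}\) inside the subring). Hence \(k[t][1/f]\) would equal \(k(t)\). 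This is impossible. The ring \(k[t]\) has infinitely many pairwise non-associate irreducibles, because for any finite list \(p_1,\dots,p_r\) the element \(1+p_1\cdots p_r\) has a new irreducible factor. So we can choose an irreducible \(p\) not dividing \(f\), and then \(1/p\notin k[t][1/f]\). This contradiction shows that \(a_1\) is algebraic over \(k\), which completes the induction.

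The main obstacle is the step that rules out the transcendental case. It requires two things: a careful passage to integrality over \(k[a_1][1/f]\), and the observation that \(k(t)\) is not a finitely generated \(k\)-algebra, which is exactly the "infinitely many primes" argument above. The remaining steps are routine field-degree bookkeeping. Alternatively, I could invoke Noether normalization: it gives \(L\) integral over a polynomial subring \(k[y_1,\dots,y_d]\), which must then be a field, forcing \(d=0\). I prefer the self-contained induction above.
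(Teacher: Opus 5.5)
Your proof is correct and complete. It is the standard inductive proof of Zariski's lemma. The paper gives no proof of this statement: it is quoted in the introduction as classical background, and even its noncommutative counterpart (the Amitsur--Small theorem) is only cited. So there is no argument in the paper to match. Two details you leave implicit are worth stating:
\begin{itemize}
\item the induction hypothesis must be quantified over all base fields, since you apply it over \(K=k(a_1)\) rather than over \(k\);
\item the map \(k\to k[x_1,\dots,x_n]/M\) is injective because \(M\) is proper, so the quotient really is an extension field of \(k\).
\end{itemize}

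It helps to see how your route relates to the one the paper alludes to, namely Kaplansky's Jacobson-ring approach, on which its G-left ideals and G-left Jacobson rings are modelled. In a commutative ring, \(a\) is integral over an ideal \(I\) iff \(a^m\in I\) for some \(m\ge 1\). Hence a G-left ideal with witness \(a\) is just an ideal maximal with respect to containing no power of \(a\), i.e.\ a G-ideal.

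Your decisive step is that \(k[t][1/f]\neq k(t)\) because some irreducible \(p\) does not divide \(f\). This says exactly that \((0)\) is not a G-ideal of \(k[t]\): the nonzero ideal \((p)\) contains no power of \(f\).

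Kaplansky's route feeds this same fact into the theorem that polynomial rings over Jacobson rings are Jacobson. That theorem delivers the weak and strong forms together: the Jacobson radical of every ideal equals its G-radical, which is its nilradical. This is the structure the paper transplants to left ideals. Your direct argument through Zariski's lemma proves exactly the weak form with less machinery, but it does not by itself give the radical statements.
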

	The strong form asserts that, for every ideal \(I\) of \(k[x_1,\dots,x_n]\), the Jacobson radical and the prime radical of \(I\) coincide: 
	\begin{theorem}[Strong Form of Classical Nullstellensatz] 
		For every ideal \(I\) of \(k[x_1,\dots,x_n]\),
		\[
		\bigcap_{\substack{I\subseteq M}}M = \bigcap_{\substack{I\subseteq P}}P,
		\]
		where the first intersection is taken over maximal ideals \(M\subset k[x_1,\dots,x_n]\) and  the second intersection over prime ideals \(P\subset k[x_1,\dots,x_n]\).
	\end{theorem}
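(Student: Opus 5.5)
The inclusion of the Jacobson radical in the prime radical is the nontrivial one, since every maximal ideal is prime and hence $\bigcap_{I\subseteq P}P\subseteq\bigcap_{I\subseteq M}M$ automatically. So I will fix $f\notin\bigcap_{I\subseteq P}P$, i.e. a prime $P\supseteq I$ with $f\notin P$, and produce a maximal ideal $M\supseteq I$ with $f\notin M$. Passing to $R=k[x_1,\dots,x_n]/P$, a finitely generated $k$-algebra that is a domain with $\bar f\neq 0$, it suffices to find a maximal ideal of $R$ not containing $\bar f$.

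The plan is the Rabinowitsch-type localization argument combined with the Weak Form stated above. I consider $R_{\bar f}=R[1/\bar f]\cong R[y]/(y\bar f-1)$. It is a nonzero ring, being a subring of the fraction field of $R$ and containing $1\neq 0$. It is also a finitely generated $k$-algebra, hence a quotient of $k[x_1,\dots,x_n,y]$. I pick a maximal ideal $N$ of $R_{\bar f}$. By the Weak Form applied in $n+1$ variables, $R_{\bar f}/N$ is a finite field extension $L$ of $k$.

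Next, let $\mathfrak m=N\cap R$, the kernel of $R\to R_{\bar f}\to L$. Then $k\subseteq R/\mathfrak m\subseteq L$, so $R/\mathfrak m$ is a domain that is finite-dimensional over $k$, hence a field. Thus $\mathfrak m$ is maximal in $R$. Moreover $\bar f$ maps to a unit in $L$, so $\bar f\notin\mathfrak m$. Pulling $\mathfrak m$ back to $k[x_1,\dots,x_n]$ gives a maximal ideal $M\supseteq P\supseteq I$ with $f\notin M$, which is what we need.

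The main obstacle is ensuring that contracting a maximal ideal along $R\to R_{\bar f}$ stays maximal. This fails for general rings and is exactly where the Weak Form (Zariski's lemma) is used, through the finiteness of $L$ over $k$ and the observation that a domain finite-dimensional over a field is a field. Everything else is formal. An alternative route would be to prove that $R$ is a Jacobson ring via the Artin–Tate argument, but invoking the Weak Form in one extra variable is the most economical.
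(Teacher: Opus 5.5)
Your proof is correct and complete. The paper itself gives no proof of this statement, since it is quoted as classical background. Your argument is, however, the commutative version of the strategy the paper uses for its one-sided analogue in Theorem~\ref{thm:divsion-G-left}. Passing to $R_{\bar f}\cong R[y]/(y\bar f-1)$ and contracting a maximal ideal is the commutative form of the correspondence $P\mapsto P[t]+R[t](1-at)$ of Proposition~\ref{prp:1-1corres-G-left}. Your use of the Weak Form in $n+1$ variables is the step that Robson's contraction result for $D[x_1,\dots,x_n,t]$ supplies in the division-ring case.

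The difference is only in packaging. The paper's framework would first show, for any commutative ring, that G-ideals with witness $a$ are the ideals maximal with respect to avoiding the powers of $a$, because integrality over $I$ reduces to $a^N\in I$. This gives $\grad{I}=\sqrt{I}$. It would then show that every G-ideal of $k[x_1,\dots,x_n]$ is maximal. You instead go directly from a prime $P$ and $f\notin P$ to a maximal ideal avoiding $f$.

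The contractions to $R$ of the maximal ideals of $R_{\bar f}$ are exactly the G-ideals of $R$ with witness $\bar f$, so both routes do the same work. Yours is more economical over a field. The G-ideal formulation is what carries over to left ideals, where prime and even completely prime left ideals no longer cut out the Jacobson radical (Example~\ref{exm:counter-com-prim}).
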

	The third form is the classical Hilbert Nullstellensatz:
	\begin{theorem}[Explicit Form of Classical Nullstellensatz] 
		For every ideal \(I\) of \(k[x_1,\dots,x_n]\),
		\[
		\bigcap_{\substack{I\subseteq M}}M = \{f\in k[x_1,\dots,x_n]\mid (\exists N\ge 1)\quad f^N\in I\},
		\]
		where the intersection is taken over maximal ideals \(M\subset k[x_1,\dots,x_n]\).
	\end{theorem}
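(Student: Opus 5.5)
The plan is to derive this from the two preceding forms together with the standard description of the prime radical. By the Strong Form, the intersection of the maximal ideals containing \(I\) equals the intersection of the prime ideals containing \(I\). It therefore suffices to prove
\[
\bigcap_{I\subseteq P}P=\{f\mid (\exists N\ge 1)\ f^N\in I\},
\]
which is a statement about an arbitrary commutative ring \(R=k[x_1,\dots,x_n]\). Write \(\sqrt{I}\) for the right-hand side.

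The inclusion \(\sqrt{I}\subseteq\bigcap P\) is immediate. If \(f^N\in I\subseteq P\) and \(P\) is prime, then \(f\in P\).

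For the reverse inclusion, take \(f\notin\sqrt{I}\). The multiplicative set \(S=\{1,f,f^2,\dots\}\) is then disjoint from \(I\). By Zorn's lemma, or by Noetherianity, choose an ideal \(P\supseteq I\) that is maximal among ideals disjoint from \(S\). We check that \(P\) is prime. Suppose \(a,b\notin P\). Maximality of \(P\) gives \(f^i\in P+Ra\) and \(f^j\in P+Rb\). Multiplying, \(f^{i+j}\in P+Rab\). Hence \(ab\notin P\), since otherwise \(f^{i+j}\in P\), contradicting \(P\cap S=\emptyset\). So \(P\) is a prime containing \(I\) with \(f\notin P\), and \(f\notin\bigcap P\).

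If one prefers not to lean on the Strong Form, the alternative is the Rabinowitsch trick. It rests only on the weak form, in its consequence that every maximal ideal of \(k[x_1,\dots,x_n,y]\) pulls back to a maximal ideal of \(k[x_1,\dots,x_n]\); this holds because the residue field is finite over \(k\), so the image of \(k[x_1,\dots,x_n]\) in it is a domain integral over \(k\), hence a field.

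Suppose \(f\) lies in every maximal ideal containing \(I\). Consider \(J=I\,k[x,y]+(1-yf)\). If \(J\) were proper, it would lie in some maximal ideal \(M\). By the pullback fact, \(M\cap k[x]\) is a maximal ideal containing \(I\), so it contains \(f\). Then \(M\) contains both \(f\) and \(1-yf\), hence contains \(1\), which is absurd. So \(1=\sum g_i h_i+q(1-yf)\) with \(h_i\in I\). Substituting \(y=1/f\) in \(k(x)\) and clearing denominators gives \(f^N\in I\).

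The only genuinely nontrivial step in either route is the input borrowed from the earlier forms. This is either the Strong Form, or the fact that contractions of maximal ideals are maximal, which rests on the Weak Form's finiteness of residue fields. Everything else is routine commutative algebra.
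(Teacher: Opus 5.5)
Both of your routes are correct. Two small points should be tidied: in Route 1, \(P\) should be maximal among ideals \emph{containing \(I\)} and disjoint from \(S\); in Route 2, set aside the case \(f=0\) before substituting \(y=1/f\). The paper gives no proof of this classical statement, so the natural comparison is with the scheme it uses for its own explicit form (Theorem~\ref{thm:explicit-null-D}). That scheme sits between your two routes. The ideal \(P\) you build in Route 1 is exactly a G-ideal with witness \(f\) in the paper's sense: no power of \(f\) lies in \(P\), but some power lies in every strictly larger ideal. This matches the paper's definition because, in a commutative ring, \(f\) is integral over an ideal \(J\) precisely when \(f^N\in J\) for some \(N\), since \(\sum_{i<N}Jf^i\subseteq J\). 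Where you invoke the Strong Form as a black box, the paper would instead use Proposition~\ref{prp:1-1corres-G-left}: \(P[t]+R[t](1-ft)\) is a maximal ideal of \(k[x_1,\dots,x_n,t]\) contracting to \(P\). Your Route 2 contraction fact then makes \(P\) maximal, so \(f\notin\jrad{I}\); this is Theorem~\ref{thm:divsion-G-left} specialized to a field. Your Route 2 is the elementwise shortcut of this argument. It applies the same contraction fact directly to a maximal ideal containing \(I\,k[x,y]+(1-yf)\), and never needs to identify which ideals realize the radical. Your Route 1, by contrast, puts all the content into the Strong Form with primes, so it only shows that the Strong and Explicit Forms are equivalent over a commutative ring. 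Its noncommutative analogue also breaks down: Example~\ref{exm:counter-com-prim} shows that the Jacobson radical of a left ideal need not be the intersection of the prime or completely prime left ideals containing it. That is why the paper replaces primes by G-left ideals, and the condition \(f^N\in I\) by integral dependence \((gf)^N\in\sum_{i<N}I(gf)^i\).
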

	The three forms above suggest natural analogues in the noncommutative setting, where ideals are replaced by one-sided ideals and polynomial rings over fields by polynomial rings over division rings. The main goal of this paper is to investigate the one-sided versions of the above results for polynomial rings over division rings in central variables, which we shall call \emph{one-sided Nullstellensatz}. Let \(D\) be a division ring and consider the polynomial ring \(D[x_1,\dots,x_n]\), where the indeterminates are central. The Amitsur--Small theorem \cite{AmitsurSmall1978}  gives the weak form of the one-sided Nullstellensatz:
	\begin{theorem}[Weak Form of One-Sided Nullstellensatz] 
		For every maximal left ideal \(M\) of \(D[x_1,\dots,x_n]\), \(D[x_1,\dots,x_n]/M\) is finite dimensional as  a left vector space over \(D\). 
	\end{theorem}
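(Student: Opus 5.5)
We prove the Amitsur--Small statement by induction on \(n\), combining a finiteness argument over the center of \(D\) with a Rabinowitsch-style localization. Let \(R=D[x_1,\dots,x_n]\), let \(M\) be a maximal left ideal, and set \(V=R/M\). Then \(V\) is a simple left \(R\)-module, and \(\Delta=\operatorname{End}_R(V)\) is a division ring by Schur's lemma. Since the \(x_i\) are central in \(R\), each \(x_i\) acts on \(V\) as an element \(\xi_i\in\Delta\). Moreover the \(\xi_i\) commute with each other and with the action of \(Z(D)\). Let \(F=Z(D)\).

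\emph{Key reduction.} It suffices to show that each \(\xi_i\) is algebraic over \(F\), i.e.\ that there is a nonzero \(p_i\in F[t]\) with \(p_i(x_i)\cdot V=0\). Granting this, \(p_i(x_i)\in \operatorname{Ann}(V)\subseteq M\) for every \(i\). Hence \(M\) contains the left ideal generated by \(p_1(x_1),\dots,p_n(x_n)\). The quotient \(R/(Rp_1(x_1)+\dots+Rp_n(x_n))\) is spanned as a left \(D\)-space by the monomials \(x^\alpha\) with \(\alpha_i<\deg p_i\), because the \(p_i\) are central monic-able polynomials and we can reduce by division in each variable. This quotient is therefore finite dimensional over \(D\), and so is its quotient \(V\).

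\emph{Proving algebraicity.} Fix \(i\) and suppose \(\xi=\xi_i\) is transcendental over \(F\). Then \(F[\xi]\subseteq\Delta\) is a polynomial ring whose nonzero elements are invertible in \(\Delta\), so \(K=F(\xi)\subseteq\Delta\). We make \(V\) a module over \(D\otimes_F K\cong D(x_i)[\text{other }x_j]\) localized, namely over \(S=D(x_i)[x_1,\dots,\widehat{x_i},\dots,x_n]\). Here \(D(x_i)=D\otimes_F F(x_i)\) is a division ring, being a central localization of the Ore domain \(D[x_i]\). Then \(V\) is a cyclic simple \(S\)-module generated by \(\bar1\), so \(V\cong S/M'\) for a maximal left ideal \(M'\) of a polynomial ring in \(n-1\) central variables over the division ring \(D(x_i)\).

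By induction, \(V\) is finite dimensional over \(D(x_i)\). The base case \(n=1\) over a division ring follows from the fact that \(D[x]\) is a left PID: \(M=Df\)-type left ideal \(Rf\) with \(f\ne0\), so \(\dim_D R/Rf=\deg f\). Thus \(V\) is finite dimensional over \(D(x_i)\) and finitely generated over \(R\). The main obstacle is deriving a contradiction from this, and here we use an Artin--Tate/Zariski-type argument. Choose a \(D(x_i)\)-basis \(v_1,\dots,v_m\) of \(V\) with \(v_1=\bar1\). Write \(x_jv_k=\sum c_{jkl}v_l\) and \(dv_k=\sum e_{dkl}v_l\) for \(d\) running over finitely many generators... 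Since \(D\) need not be finitely generated, it is better to avoid generators of \(D\) and instead use only the finitely many coefficients \(c_{jkl}\in D(x_i)\). Let \(g\in F[x_i]\setminus\{0\}\) be a common denominator; \(g\) can be taken central since \(D(x_i)=D[x_i]\) localized at \(F[x_i]\setminus 0\). Then \(W=\sum_k D[x_i][g^{-1}]v_k\) is stable under all \(x_j\) and \(D\), and contains \(\bar1\), so \(V=R\bar1\subseteq W\).

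Now take an irreducible \(q\in F[x_i]\) not dividing \(g\); one exists because \(F[x_i]\) has infinitely many irreducibles. Then \(q^{-1}v_1\in V\subseteq W\). Comparing coefficients in the basis expresses \(q^{-1}\in D[x_i][g^{-1}]\), i.e.\ \(q\) divides a power of \(g\) in \(D[x_i]\). Taking norms over the center (or using that \(F[x_i]\cap qD[x_i]=qF[x_i]\) by freeness of \(D[x_i]\) over \(F[x_i]\)), this contradicts the coprimality of \(q\) and \(g\) in \(F[x_i]\). Hence \(\xi\) is algebraic and the reduction completes the proof. I expect the care needed to be in this last step, especially in justifying that \(D(x_i)\) is a division ring and in the coefficient comparison.
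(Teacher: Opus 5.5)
The paper gives no proof of this statement; it cites the Amitsur--Small theorem \cite{AmitsurSmall1978}. Your self-contained argument has a genuine gap, and its key intermediate claim is false in general.

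The step that breaks is the claim that \(D(x_i)=D\otimes_F F(x_i)\) is a division ring. Localizing \(D[x_i]\) at the central set \(F[x_i]\setminus\{0\}\) inverts only those elements that have a nonzero left multiple in \(F[x_i]\). That is not every nonzero element unless \(D\) is special, for instance finite-dimensional over \(F\). Suppose \(a\in D\) is transcendental over \(F\); such division rings exist, e.g.\ the quotient division ring of the first Weyl algebra over \(\mathbb{Q}\), whose center is \(\mathbb{Q}\). If \(h\,(x_i-a)=p\) with \(p\in F[x_i]\), the remainder theorem (\(f-f(a)\in D[x_i](x_i-a)\)) gives \(p(a)=0\), so \(p=0\). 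Hence \(x_i-a\) is not a unit in \(D\otimes_F F(x_i)\), and your induction hypothesis cannot be applied to \(S\). Your final coefficient comparison also needs \(V\) to be free over a division ring, so it fails with this step.

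The target of your key reduction is itself false, so no repair of the localization can rescue this route. Take \(n=1\) and \(M=D[x](x-a)\) with \(a\) as above. Then \(V=R/M\cong D\) is one-dimensional over \(D\), so \(M\) is maximal and the theorem's conclusion holds. But \(x\) acts on \(V\) by \(d\mapsto da\), so \(p(\xi)\) acts by \(d\mapsto d\,p(a)\neq 0\) for every nonzero \(p\in F[t]\). Thus \(\xi\) is transcendental over \(F\) and \(M\cap F[x]=0\). Finite-dimensionality over \(D\) therefore cannot come from \(M\) containing a nonzero central polynomial in each variable; Amitsur--Small must handle simple modules on which the \(x_i\) act transcendentally over \(F\).

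Your argument is valid only when \(D\otimes_F F(x_i)\), and each iterated extension, is a division ring. This holds for instance when \(D\) is centrally finite, where your proof reduces to the Artin--Tate/Zariski proof of the classical Nullstellensatz.
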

	In \cite{AlonNullestellensatz2021}, Alan and Paran established the following strong form of the one-sided Nullstellensatz for the division ring of quaternions \(\mathbb{H}\). 
	\begin{theorem}\label{thm:strong-null-H-comp-prime}
		For every left ideal \(I\) of \(\mathbb{H}[x_1,\dots,x_n]\),
		\[
		\bigcap_{\substack{I\subseteq M}}M = \bigcap_{\substack{I\subseteq P}}P,
		\]
		where the first intersection is taken over maximal left ideals \(M\subset \mathbb{H}[x_1,\dots,x_n]\) and  the second intersection over completely prime left ideals \(P\subset \mathbb{H}[x_1,\dots,x_n]\).
	\end{theorem}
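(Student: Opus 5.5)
Every maximal left ideal is completely prime (Reyes), so the maximal left ideals containing \(I\) form a subfamily of the completely prime ones. Hence \(\bigcap_{I\subseteq P}P\subseteq\bigcap_{I\subseteq M}M\). The content of the statement is the reverse inclusion. For that it suffices to prove a local claim: \emph{every completely prime left ideal \(P\) of \(R=\mathbb{H}[x_1,\dots,x_n]\) is an intersection of maximal left ideals}. Concretely, for each \(f\notin P\) we must produce a maximal left ideal \(M\supseteq P\) with \(f\notin M\).

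The plan is to reduce to the commutative Nullstellensatz over the center \(Z=\mathbb{R}[x_1,\dots,x_n]\). Note that \(R=\mathbb{H}\otimes_{\mathbb{R}}Z\) is a free \(Z\)-module of rank \(4\).

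\textbf{Step 1 (the central part of \(P\)).} Put \(\mathfrak q=P\cap Z\) and \(A=Z/\mathfrak q\).
\begin{itemize}
\item \(\mathfrak q\) is a prime ideal of \(Z\). Indeed, for central \(c\) we have \(cP=Pc\subseteq P\), so complete primeness applies to products \(cd\) with \(c\in Z\).
\item The same observation shows that \(M_0=R/P\) is a torsion-free \(A\)-module: if \(c\in Z\setminus\mathfrak q\) and \(cg\in P\), then \(g\in P\).
\item \(M_0\) is a finitely generated \(A\)-module.
\end{itemize}
Let \(K=\operatorname{Frac}(A)\). Then \(M_0\) embeds in \(M_K=K\otimes_A M_0\), which is a cyclic module over the finite-dimensional \(K\)-algebra \(\mathbb{H}\otimes_{\mathbb{R}}K\).

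\textbf{Step 2 (simplicity at the generic point).} I would show that the image of \(P\) in \(\mathbb{H}\otimes K\) (namely \(K P\), using torsion-freeness) is again completely prime. In an Artinian ring completely prime left ideals are maximal (Reyes), so \(M_K\) is a simple module. Since \(\bar f\neq 0\) in \(M_K\), simplicity gives \((\mathbb{H}\otimes K)\bar f=M_K\).

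\textbf{Step 3 (spreading out).} Because everything is finitely generated, clearing denominators yields a nonzero \(c\in A\) with two properties. First, \((M_0)_c\) is free over \(A_c\) of positive rank (generic freeness). Second, \(\bar f\) generates \((M_0)_c\) as an \((\mathbb{H}\otimes A_c)\)-module.

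By the classical Nullstellensatz, \(Z\) is a Jacobson ring. Hence there is a maximal ideal \(\mathfrak m\supseteq\mathfrak q\) of \(Z\) with \(c\notin\mathfrak m\). Set \(N=M_0/\mathfrak m M_0=R/(P+\mathfrak mR)\). Since \((M_0)_c\) is free of positive rank and \(c\notin\mathfrak m\), \(N\) is nonzero. It is also finite-dimensional over \(Z/\mathfrak m\), and it is generated by the image of \(f\).

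A nonzero finite-length cyclic module has a maximal submodule, and no maximal submodule can contain a generator. Pulling such a maximal submodule back to \(R\) gives a maximal left ideal \(M\supseteq P+\mathfrak mR\supseteq P\) with \(f\notin M\), which proves the claim.

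\textbf{Main obstacle.} I expect Step 2 to be the hardest: showing that complete primeness survives extension of scalars to \(K\), so that \(M_K\) is simple rather than merely cyclic. My first attempt would use Reyes' characterization that \(P\) is completely prime iff \(\operatorname{End}_R(R/P)\) acts without "zero-divisor" behaviour on \(R/P\). I would transport that characterization through localization at \(Z\setminus\mathfrak q\), which is harmless because the elements being inverted are central and act injectively on \(R/P\). If that fails, the fallback is to argue directly that any nonzero proper submodule of \(M_K\) would yield, after clearing denominators, a pair \(a,b\) violating complete primeness of \(P\).
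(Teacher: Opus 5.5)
The paper does not prove this statement. It is quoted from Alon and Paran, and the paper's own strong form (Theorem~\ref{thm:stong-null-D}) uses G-left ideals instead of completely prime ones. So your argument is necessarily a different route. On its merits it is correct, apart from one false general claim that happens to be harmless here.

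The reduction is right: if $P$ is maximal, $ab\in P$, $Pb\subseteq P$ and $a\notin P$, then $1=p+ra$ gives $b=pb+rab\in P$. Steps 1 and 3 go through as written.

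The false claim is that completely prime left ideals of an artinian ring are maximal. Take $T=T_2(k)$ (upper triangular matrices). There $Te_{11}$ is completely prime, since $T/Te_{11}\cong Te_{22}$ is uniserial of length $2$ with endomorphism ring $k$, so every nonzero endomorphism is injective. But $Te_{11}$ is not maximal.

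What you actually need is only the semisimple case. The precise form of the characterization you allude to is: $P$ is completely prime iff every nonzero endomorphism of $R/P$ is injective. In a semisimple module, a nontrivial decomposition $U\oplus V$ would give a nonzero non-injective projection. Since $\mathbb{H}\otimes_{\mathbb{R}}K$ is central simple over $K$, Step 2 stands.

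Your localization transfer also works, and should be written out:
\begin{itemize}
\item With $\Sigma=Z\setminus\mathfrak q$, any endomorphism $\psi$ of $\Sigma^{-1}(R/P)$ has $\psi(\bar 1)=\bar b/s$.
\item Linearity plus torsion-freeness give $Pb\subseteq P$.
\item If $\psi\neq0$ then $b\notin P$, so right multiplication by $b$ is injective on $R/P$ by complete primeness, and it stays injective after localizing.
\item Since $\mathfrak q$ annihilates the module, endomorphisms over $\mathbb{H}\otimes K$ are the same.
\end{itemize}

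Generic freeness is dispensable. Simplicity yields $g\in R$ and $c_1\in Z\setminus\mathfrak q$ with $gf-c_1\in P$. For any maximal $\mathfrak m\supseteq\mathfrak q$ with $c_1\notin\mathfrak m$, Nakayama gives $M_0/\mathfrak mM_0\neq0$, because the nonzero torsion-free module $M_0$ has $(M_0)_{\mathfrak m}\neq0$.

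As for the comparison of routes: the paper reaches arbitrary $D$ by passing to G-left ideals and using the Robson--Small contraction result for maximal left ideals of $D[x_1,\dots,x_n,t]$. Your proof instead exploits that $\mathbb{H}$ is finite over its center. $R/P$ is finitely generated over $Z=\mathbb{R}[x_1,\dots,x_n]$, the generic fibre $\mathbb{H}\otimes_{\mathbb{R}}K$ is semisimple, and the commutative Nullstellensatz for $Z$ does the rest.

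This buys an elementary, self-contained proof that runs verbatim for every division ring finite-dimensional over its center $F$ (replace $\mathbb{R}$ by $F$). It also shows that the division ring in Example~\ref{exm:counter-com-prim} must be infinite-dimensional over its center. It cannot reach arbitrary $D$, however, and that is exactly what the G-left formulation is for.
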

	Recall that a proper left ideal \(P\) is completely prime if 
	\[
	ab\in P \text{ and } Pb\subseteq P\implies a\in P\text{ or }b\in P.\]
	This notion was introduced and studied by Reyes in \cite{reyes2010one}. 
	
	An explicit form of the one-sided Nullstellensatz for \(\mathbb H\) was obtained in \cite{aryapoor2024explicit} (see also \cite{aryapoor2025central} for a more general result).
	\begin{theorem}[Explicit Form of One-Sided Nullstellensatz for \(\mathbb{H}\)]
		For every left ideal 
		\(I\subseteq \mathbb{H}[x_1,\ldots,x_n]\), 
		\[
		\bigcap_{\substack{I\subseteq M}}M = \{f\in \mathbb{H}[x_1,\ldots,x_n]\mid (\forall q\in \mathbb{H})(\exists N\geq1)\quad (qf)^N\in\sum_{i=0}^{N}I(qf)^i\},
		\]
		where the intersection is taken over maximal left ideals \(M\subset \mathbb{H}[x_1,\dots,x_n]\). 
	\end{theorem}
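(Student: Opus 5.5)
Write \(R=\mathbb{H}[x_1,\dots,x_n]\), and let \(J(I)\) be the right-hand set in the statement. I would prove the two inclusions \(J(I)\subseteq\bigcap_{I\subseteq M}M\) and \(\bigcap_{I\subseteq M}M\subseteq J(I)\) separately.

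\textbf{The inclusion \(J(I)\subseteq \bigcap M\).} Let \(f\in J(I)\) and fix a maximal left ideal \(M\supseteq I\). By the weak form (Amitsur--Small), \(V=R/M\) is a finite-dimensional left \(\mathbb H\)-space and a simple left \(R\)-module. Since the \(x_i\) are central, each \(x_i\) acts on \(V\) by an \(R\)-endomorphism. By Schur's lemma these endomorphisms lie in the division ring \(E=\operatorname{End}_R(V)\), which is finite-dimensional over \(\mathbb R\). Hence \(E\) is one of \(\mathbb R,\mathbb C,\mathbb H\), and the commuting elements \(x_i\) generate a subfield \(L\) of \(\mathbb{C}\).

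It follows that \(M\) contains \(x_i-a_i\) for suitable \(a_i\in L\) embedded in \(\mathbb H\), after a conjugation. The natural choice of \(q\) is therefore the one that turns the evaluation at the point defined by \(M\) into a commutative situation. The key computation is as follows. Suppose \((qf)^N=\sum_{i=0}^{N} g_i (qf)^i\) with \(g_i\in I\subseteq M\). Then, as an operator on \(V\), left multiplication by \(qf\) satisfies a relation forcing \(qf\cdot 1\) to be nilpotent modulo \(M\). More precisely, set \(u=qf+M\). Each term \(g_i(qf)^i\) lies in \(M\) whenever \((qf)^i\) preserves \(M\), but that is not automatic. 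So I would instead argue with the map \(\phi: R\to V\), \(r\mapsto rv_0\), and use that \(V\) is simple.

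I would take \(q\) so that \(qf\) acts on the one-dimensional \(L\)-eigenline generating \(V\). Then \(M=\{g: g(a)=0\}\) in the sense of the evaluation used for quaternionic polynomials (slice evaluation \(g(a)=\sum c_\alpha a^\alpha\) with the coefficients on the left). The main tool is the product rule \((gh)(a)=g(h(a)ah(a)^{-1})h(a)\) when \(h(a)\neq0\). Suppose \(f(a)=b\neq0\). Choose \(q\) so that \(qb\) commutes with \(a\), for instance \(q=b^{-1}\). Then \((qf)^i(a)=1\), so \(g_i(qf)^i(a)=g_i(a)=0\), while \((qf)^N(a)=1\). This contradicts the relation, so \(f(a)=0\) and \(f\in M\).

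\textbf{The inclusion \(\bigcap M\subseteq J(I)\).} Suppose \(f\) lies in every maximal left ideal containing \(I\), and fix \(q\). Then \(qf\) also lies in \(\bigcap M\), since that intersection is invariant under left multiplication. Consider the left ideal \(I'=I+R(1-t\,qf)\) in \(R[t]=\mathbb H[x_1,\dots,x_n,t]\), in the spirit of the Rabinowitsch trick. If \(I'\) were proper, it would lie in a maximal left ideal \(M'\). By the weak form, \(M'\) corresponds to a point \((a,s)\) with \(a\) commuting with \(s\). Contracting to \(R\) gives a maximal left ideal \(M\supseteq I\) vanishing at \(a\), so \((qf)(a)=0\). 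Then \((1-tqf)(a,s)=1\), a contradiction.

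Hence \(1=\sum_j h_j(x,t)\,r_j+h(x,t)(1-t\,qf)\) with \(r_j\) generators of \(I\). Substituting \(t=(qf)^{-1}\) is illegitimate noncommutatively. Instead I would multiply on the right by \((qf)^N\) and push powers of \(t\) through, using that \(t\) is central. Comparing coefficients in \(t\) then yields \((qf)^N\in\sum_i I(qf)^i\).

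\textbf{Main obstacle.} The hardest step is this last coefficient comparison, which is where the peculiar form \(\sum_{i} I(qf)^i\) arises. I would handle it by writing \(h=\sum_k h_k t^k\) and solving the recursion \(h_k-h_{k-1}qf\in I\)-combinations, so that \(h_0\equiv 1\) and \(h_k\equiv (qf)^k\) modulo \(\sum_i I(qf)^i\). The condition \(h_{N}=0\) for large \(N\) then gives the claim.
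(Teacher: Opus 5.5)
Your proof is correct in substance, but it takes a different route from the paper. The paper does not reprove this quaternionic statement; it quotes it from earlier work. Its own explicit form, Theorem~\ref{thm:explicit-null-D}, is derived abstractly. It first shows that \(D[x_1,\dots,x_n]\) is G-left Jacobson, using the correspondence \(P\mapsto P[t]+R[t](1-at)\) of Proposition~\ref{prp:1-1corres-G-left} (an abstract Rabinowitsch trick) and Robson's theorem on contractions of maximal left ideals. Integral dependence over left ideals then gives both inclusions.

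That argument works over every division ring, but it quantifies over all \(g\in R\) and produces a genuine integral relation with \(\sum_{i=0}^{N-1}\). Specialized to \(\mathbb H\), it gives your inclusion \(\bigcap M\subseteq J(I)\) immediately (take \(g=q\)). It does not give the other inclusion. Your set \(J(I)\) tests only constants \(q\in\mathbb H\) and tolerates an \(i=N\) term, so integrality is unavailable there. The \(\mathbb H\)-specific structure you use is genuinely needed: \(M=L_a=\{g:g(a)=0\}\) for a point \(a\) with pairwise commuting coordinates, plus the product rule.

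Your Part 1 is correct. The choice \(q=f(a)^{-1}\) makes \((qf)^i(a)=1\) for all \(i\). So every \(g_i(qf)^i\), including \(i=N\), evaluates to \(0\), while \((qf)^N(a)=1\).

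Your Part 2 is also correct, and it explains the shape of the statement. Write \(u=qf\) and \(1=\sum_m\iota_mt^m+\sum_{k=0}^K h_kt^k(1-tu)\) with \(\iota_m\in I\). Comparing coefficients gives \(h_k=u^k-\sum_{i=0}^k\iota_iu^{k-i}\) and \(h_Ku=\iota_{K+1}\). Hence \(u^{K+1}=\sum_{i=0}^{K+1}\iota_{K+1-i}u^i\). The summand \(\iota_0u^{K+1}\) is exactly the \(i=N\) term; eliminating it is what the paper's integrality machinery buys.

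The step you still need to write out is the classification of maximal left ideals. Part 1 uses it, and Part 2 uses it in \(n+1\) variables. The sentence ``it follows that \(M\) contains \(x_i-a_i\) \dots after a conjugation'' is not yet an argument. One way to fill it in:
\begin{enumerate}
\item The image of \(R\) in \(\operatorname{End}_{\mathbb R}(V)\) is generated by the commuting images of \(\mathbb H\) and \(L\). It is therefore a nonzero quotient of the simple algebra \(\mathbb H\otimes_{\mathbb R}L\in\{\mathbb H,M_2(\mathbb C)\}\).
\item The simple module of that algebra has real dimension \(4\), so \(V=\mathbb H\,\bar 1\) with \(\bar 1=1+M\).
\item Define \(a_i\) by \(x_i\bar 1=a_i\bar 1\). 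Centrality of the \(x_i\) gives \(a_ia_j=a_ja_i\).
\item Since \(g-g(a)\in L_a\) for all \(g\), you get \(M=L_a=\{g:g(a)=0\}\).
\end{enumerate}
Alternatively, cite Alon--Paran for this classification. The abandoned detours in Part 1 (the map \(\phi\), the ``eigenline'') can be deleted.
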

	
	In \cite{cimprivc2025parallels}, Cimpri\v{c} obtained another version of the strong form for the quaternions, which was later generalized to finite-dimensional algebras by Cimpri\v{c} and Sch\"otz in \cite{Cimpric2026}. Specializing their result to the case of division rings yields the following theorem.  
	\begin{theorem} \label{thm:strong-null-D-prime-semprime}
		Let \(D\) be a centrally finite division ring. Then for every left ideal \(I\) of \(D[x_1,\dots,x_n]\),
		\[
		\bigcap_{\substack{I\subseteq M}}M = \bigcap_{\substack{I\subseteq P}}P =  \bigcap_{\substack{I\subseteq Q}}Q,
		\]
		where the first intersection is taken over maximal left ideals \(M\subset D[x_1,\dots,x_n]\),  the second intersection over prime left ideals \(P\subset D[x_1,\dots,x_n]\), and the third intersection over semiprime left ideals \(Q\subset D[x_1,\dots,x_n]\).
	\end{theorem}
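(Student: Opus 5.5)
Write \(R=D[x_1,\dots,x_n]\). The plan is to use the standard notions of prime and semiprime left ideals. A proper left ideal \(P\) is prime if \(aRb\subseteq P\) and \(Pb\subseteq P\) imply \(a\in P\) or \(b\in P\). A left ideal \(Q\) is semiprime if \(aRa\subseteq Q\) and \(Qa\subseteq Q\) imply \(a\in Q\).

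The two inequalities that hold in general are formal. Every maximal left ideal is prime. Indeed, let \(M\) be maximal with \(aRb\subseteq M\), \(Mb\subseteq M\), and \(b\notin M\). Then \(M+Rb=R\), so \(1=m+rb\) for some \(m\in M\), \(r\in R\). Hence \(a=am+arb\). Here \(arb\in M\) by hypothesis. Also \(am\in M\), because \(mb\in M\) for all \(m\in M\) means \(M\subseteq (M:b)\) with \((M:b)=\{s: sb\in M\}\) a proper left ideal, and maximality gives \((M:b)=M\) — but this alone does not yield \(am\in M\). The argument that does work is via the idealizer \(\mathbb I(M)=\{s: Ms\subseteq M\}\): \(\mathbb I(M)/M\) is the division ring \(\operatorname{End}_R(R/M)\), and \(b\in \mathbb I(M)\setminus M\) is invertible modulo \(M\). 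So there is \(c\) with \(cb-1\in M\), and then \(a=a(1-cb)+acb\in M\), since \(acb\in aRb\subseteq M\) and \(a(1-cb)\in M\) by the left-ideal property only if \(a\) acts correctly. I would instead use the cleaner formulation: \(R/M\) is simple, and \(aRb\subseteq M\) with \(b+M\neq 0\) gives \(aR(b+M)=0\). Since \(R(b+M)=R/M\), this means \(a(R/M)=0\), so \(a\in M\).

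Similarly, prime implies semiprime, and semiprime left ideals are closed under intersections. This gives
\[\bigcap_{I\subseteq Q}Q\subseteq\bigcap_{I\subseteq P}P\subseteq\bigcap_{I\subseteq M}M.\]

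The main step is the reverse inclusion: the Jacobson radical \(J(I)=\bigcap_{I\subseteq M}M\) is contained in every semiprime \(Q\supseteq I\). It suffices to show that every semiprime left ideal \(Q\) is an intersection of maximal left ideals. Here I would exploit that \(D\) is centrally finite. Put \(m=[D:Z]\) and \(Z=Z(D)\). Then \(D\otimes_Z D^{op}\cong M_m(Z)\), and \(R\otimes_{Z[x]}R^{op}\cong M_m(Z[x_1,\dots,x_n])\). The strategy is a Morita-type transfer. Left ideals of \(R\) correspond to \(R\)-submodules of \(R\), and \(R\) is a finitely generated free module over the commutative Noetherian Jacobson ring \(Z[x]\). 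For a left ideal \(Q\), consider the cyclic module \(R/Q\), a finitely generated \(Z[x]\)-module. The maximal left ideals containing \(Q\) detect exactly the \(R\)-module Jacobson radical of \(R/Q\).

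Semiprimeness should translate into the absence of a nonzero "nilpotent part" in \(R/Q\). Concretely: if \(f\notin J(Q)\) we get a contradiction, and otherwise \(f\in J(Q)\setminus Q\) produces an element \(a\) with \(aRa\subseteq Q\), \(Qa\subseteq Q\). To find such an \(a\), use the commutative Nullstellensatz on the annihilator \(\mathfrak a=\operatorname{Ann}_{Z[x]}(R/Q)\). The key sub-claim is that \(J_R(R/Q)\) is killed modulo a nilpotent power: some \(\sqrt{\mathfrak a}\)-type element \(g\in Z[x]\) satisfies \(g^N R\subseteq Q\) while \(g\notin Q\). Then \(a=g^k\) with \(k\) minimal and \(g^{2k}\in Q\) is central, so \(Qa=aQ\subseteq Q\) and \(aRa=a^2R\subseteq Q\). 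This forces \(a\in Q\), a contradiction. To produce the central element, I would pass to the Artinian-localizations \(R_{\mathfrak m}/Q_{\mathfrak m}\) at maximal ideals \(\mathfrak m\) of \(Z[x]\) and use that \(R/\mathfrak m R\cong D\otimes_Z Z[x]/\mathfrak m\) is a finite-dimensional simple-Artinian-times-local algebra. For modules over it, the radical is the \(\mathfrak m\)-torsion image.

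I expect the main obstacle to be this last step: showing that if \(J(Q)\ne Q\), then the discrepancy is witnessed by a \emph{central} polynomial whose power lies in \(Q\). This is exactly where central finiteness, through the Jacobson property of \(Z[x]\) and the fact that \(R\) is a finite \(Z[x]\)-algebra (a PI ring), has to enter.
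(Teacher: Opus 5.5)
The paper does not prove this statement; it quotes it from Cimpri\v{c}--Sch\"otz. Your proposal therefore has to stand on its own, and it has a genuine gap exactly at the step you flag as the main obstacle.

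Two side remarks first. Your definitions add the hypotheses $Pb\subseteq P$ and $Qa\subseteq Q$, so they are not the paper's. The paper requires only $aRb\subseteq P\Rightarrow a\in P$ or $b\in P$, and $aRa\subseteq Q\Rightarrow a\in Q$. Your version changes the statement; for instance, closure of semiprimes under intersection is no longer evident. The inclusions $\bigcap Q\subseteq\bigcap P\subseteq\bigcap M$ are fine: your last argument via simplicity of $R/M$ is correct and does not even need $Mb\subseteq M$.

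The real problem is your key sub-claim: that $\jrad{Q}\neq Q$ always produces a central $g\notin Q$ with $g^N\in Q$. This is equivalent to saying that $Q=\jrad{Q}$ whenever the central ideal $Q\cap Z[x]=\operatorname{Ann}_{Z[x]}(R/Q)$ is radical. Your primary-decomposition idea does prove this when $R/Q$ has finite length, using that $R/\mathfrak mR$ is simple artinian; this covers in particular the case $n=1$. It is false in general once $n\ge 2$.

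\textbf{Counterexample.} Take $D=\mathbb H$, $R=\mathbb H[x,y]$, and $h=(x-j)^2$. Set $L_1=R(x-i)^2+Ry$, $L_2=R(y-h)$, and $Q=L_1\cap L_2$.

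\begin{enumerate}
\item The only right root of $(x-i)^2$ is $i$, while $h(i)=-2+2k\neq 0$. So $(x-i)^2$ and $h$ are right coprime, and $L_1+L_2=R$. Hence $R/Q\cong T\oplus F$.
\item Here $T=R/L_1\cong\mathbb H[x]/\mathbb H[x](x-i)^2$. This module has a unique maximal submodule, so $\jrad{Q}\neq Q$.
\item Here $F=R/L_2\cong\mathbb H[x]$, with $y$ acting by right multiplication by $h$. One checks $h^2-2(x^2-1)h+(x^2+1)^2=0$ and $h\notin\mathbb R(x)$. Hence $\operatorname{Ann}_{\mathbb R[x,y]}(F)=(p)$ with $p=y^2-2(x^2-1)y+(x^2+1)^2$, which is irreducible (its discriminant is $-16x^2$).
\item Since $p\equiv(x^2+1)^2 \pmod{y}$, the element $p$ kills $T$.
\end{enumerate}

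Therefore $Q\cap\mathbb R[x,y]=\operatorname{Ann}(T)\cap(p)=(p)$ is prime. No central element can witness the failure of $Q=\jrad{Q}$.

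The theorem itself survives, but only through a non-central witness. Write $1=l_1+l_2$ with $l_\nu\in L_\nu$, and set $b=(x^2+1)l_2$. Then:
\begin{itemize}
\item $b\in L_2$;
\item $b\equiv x^2+1\not\equiv 0 \pmod{L_1}$, so $b\notin Q$;
\item $bRb\subseteq (x^2+1)^2Rl_2\subseteq Q$;
\item $b$ even satisfies $Qb\subseteq Q$, so it is a witness for your variant of the definition as well.
\end{itemize}

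So semiprimeness has to be exploited through non-central elements, such as elements of $\mathfrak mR$ (here $\mathfrak m=(x^2+1,y)$) or $\mathfrak pR$ whose cosets lie in $\operatorname{rad}(R/Q)$. What it must deliver is a module-theoretic property of $R/Q$ over $Z[x]$, not radicality of $Q\cap Z[x]$. In the example, the torsion piece $T$ hides inside the support of $F$, and this is invisible to the central annihilator. Your sketch has no mechanism for producing such witnesses, and the Morita and localization remarks do not supply one. So the reverse inclusion $\bigcap_{I\subseteq M}M\subseteq\bigcap_{I\subseteq Q}Q$ remains unproved.
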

	Recall that a proper left ideal \(P\) of a ring \(R\) is prime (resp., semiprime) if 
	\[
	aRb\subseteq P \implies a\in P \text{ or } b\in P\quad  (\text{resp., } aRa\subseteq P \implies a\in P).
	\] 
	Neither Alan and Paran's strong form (Theorem~\ref{thm:strong-null-H-comp-prime}) nor Cimpri\v{c} and Sch\"otz's strong form (Theorem~\ref{thm:strong-null-D-prime-semprime}) holds for an arbitrary division ring, even when \(n=1\) (see Example~\ref{exm:counter-com-prim}). 
	
	In this paper, we present a strong form of the one-sided Nullstellensatz over an arbitrary division ring and derive a general explicit form. Our approach is based on the notion of integral dependence over one-sided ideals developed in \cite{aryapoor2026integral,aryapoor2026integral2}.  Recall that an element \(a\) of a ring \(R\) is integral over a left ideal \(I\) of \(R\) if there exist \(r_0,\dots,r_{n-1}\in I\) such that 
	\[ a^n+r_{n-1}a^{n-1}+\cdots+r_1a+r_0=0. \]
	A left ideal \(P\) of \(R\) is called a \emph{G-left ideal} if there exists an element \(a\in R\) that is not integral over \(P\) but is integral over every left ideal properly containing \(P\).  Our main results are the following theorems, whose proofs are given in Subsection~\ref{subsec:proofs}. 
	\begin{theorem}[Strong Form of One-Sided Nullstellensatz] \label{thm:stong-null-D}
		Let \(D\) be a division ring. For every left ideal \(I\) of \(D[x_1,\dots,x_n]\),
		\[
		\bigcap_{\substack{I\subseteq M}}M = \bigcap_{\substack{I\subseteq P}}P,
		\]
		where the first intersection is taken over maximal left ideals \(M\subset D[x_1,\dots,x_n]\) and  the second intersection over G-left ideals \(P\subset D[x_1,\dots,x_n]\).
	\end{theorem}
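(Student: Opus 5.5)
The inclusion $\bigcap_{I\subseteq P}P\subseteq\bigcap_{I\subseteq M}M$ is formal. I will show that every maximal left ideal $M$ of $R=D[x_1,\dots,x_n]$ is a G-left ideal, witnessed by $a=1$. The only left ideal properly containing $M$ is $R$, and every element $b$ is integral over $R$ via $b+(-b)=0$. On the other hand, $1$ is not integral over $M$: a relation $1+r_{n-1}+\dots+r_0=0$ with $r_i\in M$ would force $1\in M$. So the maximal left ideals containing $I$ are among the G-left ideals containing $I$, which gives this inclusion.

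For the reverse inclusion it suffices to show that every G-left ideal $P$ is an intersection of maximal left ideals; I would in fact aim for the stronger statement that every G-left ideal is maximal. Fix a witness $a$. By a Zorn's lemma argument, $P$ is maximal among left ideals over which $a$ is not integral: any integrality relation involves only finitely many coefficients, so non-integrality passes to unions of chains. The key tool is a one-sided Rabinowitsch trick. Adjoin a central variable $t$ and set
$$Q=R[t]P+R[t](1-ta).$$
I would prove that $Q$ is proper if and only if $a$ is not integral over $P$. Suppose $1=\sum_j g_j(t)p_j+h(t)(1-ta)$. Compare coefficients of powers of $t$, using that $t$ is central and the $p_j$ sit on the right of their coefficients. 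Then multiply the coefficient identities on the right by suitable powers of $a$ and telescope. This produces $a^N\in\sum_{i<N}Pa^i$, i.e.\ an integrality relation with coefficients in $P$; the converse direction is similar. This is the step I expect to be the most delicate, since the left/right bookkeeping must come out with coefficients in $P$ on the left of the powers of $a$.

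Given this, choose a maximal left ideal $M'\supseteq Q$ of $D[x_1,\dots,x_n,t]$. Its contraction $M'\cap R$ contains $P$. Moreover $a$ is not integral over $M'\cap R$, because an integrality relation over $M'\cap R$ would, by the same telescoping, put $1$ in $M'$. Maximality of $P$ then gives $P=M'\cap R$. Hence $R/P$ embeds as an $R$-submodule of the simple $R[t]$-module $V=R[t]/M'$, which is finite dimensional over $D$ by the Amitsur--Small theorem.

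The remaining, and I think hardest, step is to deduce that $P$ is maximal, or at least that $\bigcap_{P\subseteq M}M=P$. My first attempt is to take $f\in R\setminus P$ and argue by contradiction. Then $a$ is integral over $P+Rf$, and I would want to show $P+Rf=R$. Since $V$ is simple over $R[t]$ and $t$ acts on $V$ as an $R$-endomorphism, the $R$-module $V$ is finite-length and "isotypic-like". Multiplication by $t$ inverts the action of $a$ on the image of $1$, as $ta\equiv 1$ modulo $M'$. Using this, a proper submodule $(P+Rf)/P$ of $R/P$ should allow one to build a larger proper left ideal of $R[t]$ containing $M'$, contradicting maximality. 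If this direct argument fails, I would fall back on proving only $J(P)=P$. For that I would use that $R/P$ has finite length and apply the same Rabinowitsch construction to each $f\notin P$ to locate a maximal left ideal containing $P$ but not $f$.
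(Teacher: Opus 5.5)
\paragraph{Verdict: genuine gap in the final step.} Your first inclusion is correct: every maximal left ideal is a G-left ideal with witness $1$. Your reduction is also sound and essentially the paper's. You take a maximal left ideal $M'$ of $D[x_1,\dots,x_n,t]$ with $M'\cap R=P$, so that $R/P$ embeds in $V=R[t]/M'$, which is finite dimensional over $D$. The paper gets $M'=P[t]+R[t](1-at)$ directly from Proposition~\ref{prp:1-1corres-G-left}. If you prove the Rabinowitsch equivalence yourself, note that the telescoping only gives $a^N\in\sum_{i=0}^{N}Pa^i$, since the constant term contributes $p_0a^N$. You then need $a^{2N}=a^N\cdot a^N\in Ra^N\subseteq\sum_{i=0}^{N}Pa^i$ to obtain an honest integrality relation.

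\paragraph{Your main target is false.} You aim to show that every G-left ideal is maximal, but this fails for general $D$; the paper claims it only for Amitsur--Small division rings. Here is a counterexample.
\begin{itemize}
\item Let $D=(-1,-1)_{\mathbb{Q}}$ and $V=D^2$, with $D$ acting on the left, $x$ by $(w_1,w_2)\mapsto(w_1i,w_2i)$, and $t$ by $(w_1,w_2)\mapsto(w_2,3w_1)$.
\item The $x$-stable left $D$-lines are $D(1,\beta)$ with $\beta\in\mathbb{Q}(i)$, together with $D(0,1)$. None is $t$-stable, because $3$ is not a square in $\mathbb{Q}(i)$. So $V$ is a simple $D[x,t]$-module.
\item With $v=(1,j)$ and $a=-\tfrac13 j+\tfrac23 kx$, one checks $atv=v$. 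Hence $\operatorname{ann}_{D[x,t]}(v)$ is a maximal left ideal containing $1-at$.
\item By Proposition~\ref{prp:1-1corres-G-left}, its contraction $P=\operatorname{ann}_{D[x]}(v)$ is a G-left ideal with witness $a$.
\item But $D[x]/P\cong D[x]v=V$ contains the proper submodule $0\oplus D$, so $P$ is not maximal.
\end{itemize}

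\paragraph{Why neither of your routes closes the step.} Your direct argument via $P+Rf$ cannot work. Since $V$ is simple over $R[t]$, any nonzero $R$-submodule of $R/P\subseteq V$ generates all of $V$ over $R[t]$, so no larger proper left ideal of $R[t]$ arises. Your fallback, applying the Rabinowitsch construction to each $f\notin P$, gives no mechanism for producing a maximal left ideal that contains $P$ but not $f$.

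\paragraph{The missing idea.} What you need is that $V$ is \emph{semisimple} as a left $R$-module.
\begin{itemize}
\item Since $\dim_D V<\infty$, $V$ has a simple $R$-submodule $W$.
\item Because $t$ is central, each $t^j$ acts as an $R$-endomorphism of $V$, so each $t^jW$ is $0$ or simple.
\item $\sum_{j\ge 0}t^jW$ is a nonzero $R[t]$-submodule, hence equals $V$.
\item So $V$ is a finite direct sum of simple $R$-modules, and so is its submodule $R/P$.
\item Therefore $P$ is a finite intersection of maximal left ideals, and $\jrad{P}=P$.
\end{itemize}
This is exactly what the paper obtains by citing \cite[Proposition 5.3]{robson1981liberal}, and it is the argument spelled out in the proof of Theorem~\ref{thm:fully-bounded-G-left}. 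Your remark that $V$ is ``isotypic-like'' pointed in this direction. The conclusion you needed, though, was semisimplicity of $R/P$, not maximality of $P$.
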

	\begin{theorem}[Explicit Form of One-Sided Nullstellensatz] \label{thm:explicit-null-D}
		Let \(D\) be a division ring. For every left ideal \(I\) of \(R=D[x_1,\dots,x_n]\),
		\[
		\bigcap_{\substack{I\subseteq M}}M = \{f\in R\mid (\forall g\in R)(\exists N\geq1)\quad (gf)^N\in\sum_{i=0}^{N-1}I(gf)^i\},
		\]
		where the intersection is taken over maximal left ideals \(M\subset D[x_1,\dots,x_n]\).
	\end{theorem}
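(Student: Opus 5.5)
We prove Theorem~\ref{thm:explicit-null-D} by deriving it from Theorem~\ref{thm:stong-null-D}. Write \(J(I)\) for the intersection of the maximal left ideals containing \(I\), and \(S(I)\) for the set on the right-hand side. First we reformulate \(S(I)\). For \(a=gf\), the condition \(a^N\in\sum_{i=0}^{N-1}Ia^i\) says exactly that \(a^N+r_{N-1}a^{N-1}+\cdots+r_0=0\) for some \(r_i\in I\), i.e.\ that \(a\) is integral over \(I\). Thus
\[
S(I)=\{f\in R\mid Rf \text{ consists of elements integral over } I\}.
\]
The proof then splits into the inclusions \(J(I)\subseteq S(I)\) and \(S(I)\subseteq J(I)\).

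\emph{Inclusion \(J(I)\subseteq S(I)\).} Let \(f\in J(I)\) and \(g\in R\). Since \(J(I)\) is a left ideal, \(a=gf\in J(I)\), so we must show that every element of \(J(I)\) is integral over \(I\). Suppose \(a\) is not integral over \(I\). Consider the family of left ideals \(L\supseteq I\) over which \(a\) is not integral. This family is inductive: an integral relation for \(a\) involves only finitely many coefficients, so if \(a\) were integral over a union of a chain, it would already be integral over some member of the chain. By Zorn's lemma there is a maximal such \(P\). By construction \(a\) is not integral over \(P\) but is integral over every strictly larger left ideal, so \(P\) is a G-left ideal containing \(I\). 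Theorem~\ref{thm:stong-null-D} gives \(J(I)\subseteq P\), so \(a\in P\). But any element of a left ideal is integral over it, since \(a-a=0\) is an integral relation of degree \(1\) with coefficient \(r_0=-a\in P\). This is a contradiction.

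\emph{Inclusion \(S(I)\subseteq J(I)\).} Let \(f\in S(I)\) and suppose \(f\notin M\) for some maximal left ideal \(M\supseteq I\). Then \(M+Rf=R\), so \(1=m+gf\) for some \(m\in M\) and \(g\in R\). Put \(a=gf\); then \(a\equiv 1 \pmod M\). By hypothesis \(a\) is integral over \(I\subseteq M\), so \(a^N+\sum_{i<N} r_ia^i=0\) with \(r_i\in M\). We need to reduce this relation modulo \(M\), and this is the delicate point, because \(M\) is only a left ideal and \(r_ia^i\) need not lie in \(M\) in general. The key observation is that \(R/M\) is a left \(R\)-module on which right multiplication by \(a\) is not well defined; however, \(a-1\in M\) gives \(ra=r+r(a-1)\equiv r \pmod M\) for every \(r\in R\), since \(r(a-1)\in M\). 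Iterating, \(a^i\equiv 1\pmod M\) and \(r_ia^i\equiv r_i\equiv 0\pmod M\). The relation then reduces to \(1\equiv 0\pmod M\), i.e.\ \(1\in M\), a contradiction. Hence \(f\in J(I)\).

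The expected main obstacle is hidden in Theorem~\ref{thm:stong-null-D}, which we assume. Granting it, the only substantive step is the Zorn's lemma construction of a G-left ideal avoiding integrality of \(a\), together with checking that non-integrality passes to unions of chains.
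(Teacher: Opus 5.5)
Your proof is correct, and the forward inclusion is the paper's argument with the citations unpacked. The paper invokes Proposition~\ref{prp:equiv-G-left}, which rests on the strong form (\(\jrad{I}=\grad{I}\)) together with Proposition~\ref{prp:grad-semi-int}. Your Zorn's lemma construction of a G-left ideal \(P\supseteq I\) with witness \(a\) is a self-contained proof of that proposition.

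The reverse inclusion is where you genuinely differ. The paper passes through the integral radical: it uses Proposition~\ref{prp:int-in-intrad} to get \(Rf\subseteq\irad{I}\), then the G-left Jacobson property (Proposition~\ref{prp:irad-G-left}) to identify \(\irad{I}\) with \(\jrad{I}\). You instead pick \(g\) with \(a=gf\equiv 1\) modulo a maximal left ideal \(M\not\ni f\). Since \(r(a-1)\in M\) gives \(ra\equiv r\), the integral relation for \(a\) collapses to \(1\in M\).

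Your route is more elementary and shows that this inclusion holds in every ring, with no input from the Nullstellensatz. Indeed, the paper's appeal to the G-left Jacobson property at that step is more than is needed, since \(\irad{I}\subseteq\jrad{I}\) always holds by Proposition~\ref{prp:incl-grad-irad-jrad}. The paper's route is shorter given its preliminaries and fits its framework of comparing \(\grad{I}\), \(\irad{I}\) and \(\jrad{I}\).
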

	A classical approach to the Nullstellensatz is based on the notion of a Jacobson ring (see, for example, \cite{kaplansky2006commutative}). Following this approach, we introduce the notion of a G-left Jacobson ring and use it to formulate our strong and explicit forms of the one-sided Nullstellensatz. This enables us to extend the one-sided Nullstellensatz to a more broader class of rings (see Subsection~\ref{subsec:fully}). 
	
	Section~\ref{sec:back-pre} provides the necessary preliminaries, and the main results are presented in Section~\ref{sec:one-sided}. Throughout the paper, all rings are assumed to be associative and unital.

	\section{Preliminaries}\label{sec:back-pre}
	This section presents the necessary preliminaries, most of which are taken from \cite{aryapoor2026integral, aryapoor2026integral2}. For proofs of the stated results, we refer the reader to \cite{aryapoor2026integral, aryapoor2026integral2}. Throughout this section, let \(R\) denote a ring.

	An element \(a\) of  \(R\) is \emph{integral} over a left ideal \(I\subseteq R\) if there exists \(n\geq 1\) such that \(a^n\in \sum_{i=0}^{n-1}Ia^i\), that is,  
	\[ a^n+r_{n-1}a^{n-1}+\cdots+r_1a+r_0=0, \]
	for some \(r_0,\dots,r_{n-1}\in I\). 
	A subset is \emph{integral} over \(I\) if each of its elements is. 
	A left ideal \(I\) of a ring \(R\) is said to be \emph{integrally closed} if no left ideal of \(R\) properly containing \(I\) is integral over \(I\). The \emph{integral radical} of a left ideal \(I\), denoted by \(\irad{I}\), is the intersection of all integrally closed left ideals of \(R\) that contain \(I\). The \emph{Jacobson radical} of a left ideal \(I\), denoted by \(\jrad{I}\), is defined as the intersection of all maximal left ideals containing \(I\). 
	\begin{proposition}\label{prp:int-in-intrad}
		Let \(I\) and \(J\) be left ideals of \(R\). If \(J\) is integral over
		\(I\), then \(J \subseteq \irad{I}.\)
	\end{proposition}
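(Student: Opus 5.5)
Since \(\irad{I}\) is by definition the intersection of all integrally closed left ideals containing \(I\), it suffices to show that \(J \subseteq K\) for every integrally closed left ideal \(K\) of \(R\) with \(I \subseteq K\). Fix such a \(K\). The plan is to compare \(K\) with the left ideal \(K+J\): we show that \(K+J\) is integral over \(K\). Then integral closedness of \(K\) rules out \(K \subsetneq K+J\), so \(K+J = K\), that is, \(J\subseteq K\).

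The easy ingredient is monotonicity in the base ideal. If \(a\) is integral over \(I\), say \(a^n\in\sum_{i=0}^{n-1}Ia^i\), then also \(a^n\in\sum_{i=0}^{n-1}Ka^i\) because \(I\subseteq K\). Hence \(J\) is integral over \(K\). Elements of \(K\) are integral over \(K\) trivially, with \(n=1\). So the remaining point is that a sum \(k+b\), with \(k\in K\) and \(b\in J\), is integral over \(K\).

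The main step is to show that if \(b\) is integral over \(K\) and \(k\in K\), then \(k+b\) is integral over \(K\). Take a relation \(b^n\in\sum_{i<n}Kb^i\). Expanding \((k+b)^n\), every monomial containing at least one factor \(k\) can be written with its leftmost \(k\) in front. It then lies in \(K\cdot(\text{word})\), and the word is a product of \(k\)'s and \(b\)'s. These words do not obviously reduce to \(\sum_{i<n}K(k+b)^i\), because \(K\) is only a left ideal and \(Kb\not\subseteq K\) in general. This is where I expect the obstacle.

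I would first try the following approach. Consider the left \(R\)-module \(R/K\) and the left \(R\)-submodule \(L\) generated by the images of \(1,c,\dots,c^{m}\), where \(c=k+b\). We need to show that for some \(m\), right multiplication by \(c\) maps \(\sum_{i\le m}R\,c^i+K\) into itself modulo an appropriate filtration. Right multiplication by \(k\) is the troublesome part, since \(Rk\subseteq K\) but \(Kk\subseteq K\) too. In fact \(xk\in K\) for every \(x\in R\), because \(K\) is a left ideal and \(k\in K\). So any word ending in \(k\) lies in \(K\). More generally, any word \(w\,k\,w'\) lies in \(Kw'\).

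Using this, I expand \((k+b)^N\) by tracking the rightmost \(k\). This gives \((k+b)^N=b^N+\sum_{j=0}^{N-1}r_j b^{j}\) with \(r_j\in K\), since \(w k\in K\) and the part after the rightmost \(k\) is \(b^j\). Hence \(c^N\in b^N+\sum_{j<N}Kb^j\). The same identity for smaller powers shows that \(\sum_{j<m}Kc^j=\sum_{j<m}Kb^j\) for every \(m\), by triangularity: \(c^j\equiv b^j \pmod{\sum_{i<j}Kb^i}\). Now take \(N=n\). Then
\[
c^n\in b^n+\sum_{j<n}Kb^j\subseteq\sum_{j<n}Kb^j=\sum_{j<n}Kc^j.
\]
So \(c\) is integral over \(K\), which completes the argument.
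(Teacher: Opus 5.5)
Your proof is correct and complete. This paper does not prove the proposition itself; it quotes it from \cite{aryapoor2026integral, aryapoor2026integral2}. Your route is the natural self-contained argument: fix an integrally closed \(K\supseteq I\), observe that \(J\) is integral over \(K\), and show that \(K+J\) is integral over \(K\), which forces \(K+J=K\).

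The key computation is sound. Splitting each word of \((k+b)^N\) at its rightmost \(k\) gives \((k+b)^N\in b^N+\sum_{j<N}Kb^j\), because \(Rk\subseteq K\). The triangular induction giving \(\sum_{j<m}Kc^j=\sum_{j<m}Kb^j\) works because \(\sum_{i<j}Kb^i\) is a left ideal, and so is stable under left multiplication by \(K\). That closure is the only fact the induction uses, so it is worth stating explicitly. In effect you have shown that integrality over \(K\), with the same degree \(n\), depends only on the coset \(b+K\), which is exactly what is needed.

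The exploratory paragraph about a submodule of \(R/K\) and a filtration is never used and can be deleted. The obstacle you anticipated with the leftmost \(k\) disappears once you track the rightmost \(k\) instead.
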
 
	
	\begin{theorem}\label{thm:rad-artinian-algebra}
		Let \(R\) be a ring satisfying one of the following properties:
		\begin{enumerate}
			\item \(R\) is left artinian.
			\item \(R\) is an algebraic algebra over a field \(k\).
			\item \(R\) is an algebra over a field \(k\) with \(\dim_k R < |k|\).
		\end{enumerate} 
		Then, for any left ideal \(I\) in \(R\), the left ideal \(\jrad{I}\) is integral over \(I\).  
	\end{theorem}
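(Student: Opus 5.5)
The plan is to work with one element \(a\in\jrad{I}\) at a time and produce a monic relation \(a^n\in\sum_{i<n}Ia^i\). The basic tool is the following observation. If \(a\in\jrad{I}\) and \(x\in R\), then \(xa\in\jrad{I}\). Hence for any \(j\in\jrad{I}\) and \(t\in I\) we have \(R(1-j-t)+I=R\): otherwise this left ideal would lie in a maximal left ideal \(M\supseteq I\), which contains \(j\) and \(t\) and therefore \(1\). So there are \(s\in R\) and \(t'\in I\) with \(s(1-j-t)=1+t'\). This is the one-sided substitute for "\(1-j\) is a unit" that will absorb the non-monic coefficients produced below.

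\textbf{Case (1), \(R\) left artinian.} Consider the left ideals
\[
N_m = Ra^m+\sum_{i<m}Ia^i .
\]
They form a descending chain, because \(Ra^{m+1}\subseteq Ra^m\) and \(Ia^m\subseteq Ra^m\). By the artinian hypothesis the chain stabilizes, say \(N_m=N_{m+1}\). Then
\[
a^m=ra^{m+1}+r_ma^m+\sum_{i<m}r_ia^i,\qquad r_i\in I,
\]
that is, \((1-ra-r_m)a^m\in\sum_{i<m}Ia^i\). Applying the observation with \(j=ra\) gives
\[
(1+t')a^m\in\sum_{i<m}Ia^i .
\]
This is a relation in which the leading coefficient is \(1+t'\) rather than \(1\).

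\textbf{Main obstacle.} Removing the stray term \(t'a^m\) is the step I expect to be hardest. I would first try to rerun the chain argument on the left \(R\)-module \(R/I\), or on modified left ideals such as \(N'_m=Ra^m I+\sum_{i<m}Ia^i+Ia^m\). The aim is that the coefficient of \(a^m\) obtained at stabilization already lies in \(\jrad{I}\) with no extra term from \(I\). If that fails, I would iterate the relation, multiplying it on the right by powers of \(a\) and feeding it back in, and use the artinian condition once more on the resulting chain to force a genuinely monic relation.

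\textbf{Case (2), \(R\) algebraic over \(k\).} Here \(a\) satisfies a polynomial over \(k\). Factor it as \(\lambda a^m(1-a\,h(a))=0\) with \(\lambda\in k^\times\) and \(h\in k[x]\). Since \(a\,h(a)\in\jrad{I}\), the observation gives \(s(1-ah(a))=1+t'\). The trouble is that \(s\) does not commute with \(a^m\), so I would instead use that \(1-ah(a)\) lies in the commutative artinian ring \(k[a]\) and is not a zero divisor modulo the relevant part. This should reduce the question to the finite-dimensional algebra \(k[a]\) together with the module \(R/I\), so the case (1) argument applies inside a left artinian setting.

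\textbf{Case (3), \(\dim_kR<|k|\).} I would reduce to case (2) by an Amitsur-type counting argument. If \(a\) were not algebraic modulo the relevant structure, the elements \(1-\lambda a\) with \(\lambda\in k\) would give \(|k|\) many left ideals \(R(1-\lambda a)+I\). Each of these equals \(R\) by the observation, which produces elements \(s_\lambda\) that are \(k\)-linearly independent modulo \(I\). Since there are \(|k|>\dim_kR\) of them, this is a contradiction, so \(a\) must be algebraic over \(k\) and case (2) finishes the proof.
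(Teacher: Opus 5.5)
\textbf{There is a genuine gap, mainly in case (3). In cases (1) and (2) the missing step is much smaller than you think.}

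\textbf{Cases (1) and (2).} The step you call the main obstacle is a one-liner. From $(1+t')a^m\in\sum_{i<m}Ia^i$ you get $a^m\in\sum_{i\le m}Ia^i$. This set is a left ideal, so left multiplication by $a$ gives $a^{m+1}=a\cdot a^m\in\sum_{i<m+1}Ia^i$, which is a monic relation of degree $m+1$. Your chain argument therefore already finishes case (1), and the fallback ideas (chains in $R/I$, the modified $N'_m$) are unnecessary.

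In case (2) your worry that $s$ does not commute with $a^m$ is a misdiagnosis. You only need $1-ah(a)$ to commute with $a^m$, which it does, so $(1-ah(a))a^m=0$. Left multiplication by $s$ then gives $(1+t')a^m=0$, so $a^m\in Ia^m$ and $a^{m+1}\in Ia^m$. Equivalently, $a^m=h(a)a^{m+1}\in Ra^{m+1}$, and the case (1) argument applies verbatim, since all it used was $a^m\in Ra^{m+1}+\sum_{i\le m}Ia^i$.

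\textbf{Case (3).} Here the proposal genuinely fails. The conclusion ``$a$ must be algebraic over $k$'' is false. Take $k$ uncountable and $R=k[x]$, so $\dim_kR=\aleph_0<|k|$. Then $I=Rx$ is maximal, and $a=x\in\jrad{I}$ is transcendental.

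The claimed $k$-linear independence of the $s_\lambda$ modulo $I$ is also unjustified. Amitsur's trick clears denominators in a dependence among genuine, mutually commuting inverses $(1-\lambda a)^{-1}$. Your $s_\lambda$ are only one-sided inverses modulo $I$ and commute with nothing, so no polynomial relation can be extracted.

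The counting has to be done where honest inverses exist. One route that works:
\begin{enumerate}
\item Unwind coefficients in $R[t]$, with $t$ central. If $1\in I[t]+R[t](1-at)$, then $(1-r_0)a^N\in\sum_{i<N}Ia^i$ for some $r_0\in I$, and integrality follows by the trick above.
\item Suppose instead this left ideal is proper. Choose a maximal left ideal $\mathfrak M$ containing it and set $V=R[t]/\mathfrak M$. Multiplication by $t$ is an element $\tau$ of the division ring $\operatorname{End}_{R[t]}(V)$, and $\tau\neq0$ because $\tau(a\bar1)=\bar1$.
\item If $k$ is countable, then $\dim_k R<|k|$ forces $\dim_kR<\infty$ and case (2) applies. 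Otherwise $\dim_k\operatorname{End}_{R[t]}(V)\le\dim_kV\le\dim_kR[t]<|k|$. Amitsur's counting, now legitimate inside this division ring, shows $\tau^{-1}$ is algebraic with minimal polynomial $q$ satisfying $q(0)\neq0$.
\item Since $a^j\bar1=\tau^{-j}\bar1$, we get $q(a)\in\mathfrak M\cap R$. This is a proper left ideal containing $I$ and an element $q(0)(1-j)$ with $j\in Ra\subseteq\jrad{I}$. That contradicts your own basic observation $R(1-j)+I=R$.
\end{enumerate}
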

	
	A left ideal \(P\) of \(R\) is called  a \emph{G-left ideal with witness} \(a\in R\) if \(a\) is not integral over \(P\) and is integral over every left ideal \(J\supsetneq P\). A left ideal \(P\) is called a \emph{G-left ideal} if there exists an element \(a\in R\) such that \(P\) is a G-left ideal with witness \(a\). 
	\begin{proposition}\label{prp:grad-quotient-ring}
		Let \(J\) be a two-sided ideal of \(R\). Then, for every left ideal
		\(I\) of \(R\) containing \(J\),
		\[ \grad{\,I/J\,}=\frac{\grad{I}}{J}.
		\]
	\end{proposition}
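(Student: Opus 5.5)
The plan is to show that the lattice correspondence \(K\mapsto K/J\), between left ideals of \(R\) containing \(J\) and left ideals of \(R/J\), restricts to a bijection between G-left ideals of \(R\) containing \(I\) and G-left ideals of \(R/J\) containing \(I/J\). Throughout I read \(\grad{I}\) as the intersection of all G-left ideals containing \(I\), by analogy with \(\irad{I}\) and \(\jrad{I}\). Once the bijection is established, the identity follows because the correspondence preserves intersections: \(\bigcap_\alpha (P_\alpha/J)=(\bigcap_\alpha P_\alpha)/J\) whenever every \(P_\alpha\supseteq J\). The degenerate case of an empty family also matches, since then both sides equal \(R/J=R/J\).

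\textbf{Key lemma.} For a left ideal \(K\supseteq J\) and \(a\in R\), the element \(a\) is integral over \(K\) in \(R\) if and only if \(\bar a=a+J\) is integral over \(K/J\) in \(R/J\).

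\emph{Forward direction.} Reduce the equation \(a^n+r_{n-1}a^{n-1}+\cdots+r_0=0\), with \(r_i\in K\), modulo \(J\).

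\emph{Converse.} Suppose \(\bar a^n+\bar r_{n-1}\bar a^{n-1}+\cdots+\bar r_0=0\) with \(r_i\in K\). Lifting gives
\[
a^n+r_{n-1}a^{n-1}+\cdots+r_1a+r_0=j\in J .
\]
Replacing \(r_0\) by \(r_0-j\) gives an integral equation for \(a\) over \(K\). This is legitimate because \(J\subseteq K\), so \(r_0-j\in K\); it is exactly where the hypothesis \(J\subseteq K\) is used.

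\textbf{Transfer of the G-property.} Every left ideal of \(R\) containing \(I\) contains \(J\). Let \(P\supseteq I\) be a left ideal. The left ideals of \(R\) properly containing \(P\) are exactly the preimages of the left ideals of \(R/J\) properly containing \(P/J\). Applying the key lemma both to \(P\) and to each \(K\supsetneq P\), we get: \(P\) is a G-left ideal with witness \(a\) if and only if \(P/J\) is a G-left ideal of \(R/J\) with witness \(\bar a\). In the reverse direction, any witness in \(R/J\) can be lifted to some \(a\in R\), and the same equivalence applies. Since every left ideal of \(R/J\) containing \(I/J\) has the form \(P/J\) with \(I\subseteq P\), this yields the desired bijection and hence the proposition.

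I do not expect a serious obstacle. The only point needing care is the absorption of the error term \(j\in J\) into the constant coefficient in the key lemma. Everything else is the routine correspondence theorem for left ideals modulo a two-sided ideal.
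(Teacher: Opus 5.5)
Your proof is correct and complete. The key lemma absorbs the error term \(j\in J\) into the constant coefficient, which is exactly where \(J\subseteq K\) is needed, and it shows that \(P\mapsto P/J\) matches the G-left ideals of \(R\) containing \(I\) with those of \(R/J\) containing \(I/J\) (witnesses corresponding via \(a\mapsto \bar a\)), a matching that commutes with intersections. The paper gives no proof of this proposition itself, deferring to the author's earlier work on integral dependence over left ideals, so I cannot compare routes directly, but this correspondence argument is the natural one.
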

	\begin{proposition}\label{prp:1-1corres-G-left}
		Let \(a\in R\). Then the assignment \(P\mapsto P[t]+ R[t](1-at)\) establishes a 1-1 correspondence between the set of all G-left ideals \(P\subset R\) with witness \(a\)   and the set of all maximal left ideals of \(R[t]\) containing \(1-at\). Furthermore, for any G-left ideal  \(P\subset R\) with witness \(a\),
		\[
		\left(P[t]+ R[t](1-at)\right)\cap  R = P.  
		\]
	\end{proposition}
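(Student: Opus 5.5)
The plan is to reduce everything to one lemma that links properness of \(J[t]+R[t](1-at)\) with integrality of \(a\) over \(J\). Throughout, \(t\) is central and \(R[t](1-at)\) is a left ideal. Write \(\equiv\) for congruence modulo \(R[t](1-at)\). Since \(1\equiv at\) and \(t\) is central, we get \(1\equiv (at)^m=a^mt^m\). More generally \(c\equiv c\,a^mt^m\) for every \(c\in R[t]\), because \(c-c(at)^m=c(1+at+\dots+(at)^{m-1})(1-at)\) and the factors commute.

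\emph{Lemma.} For a left ideal \(J\subseteq R\), the left ideal \(J[t]+R[t](1-at)\) is proper if and only if \(a\) is not integral over \(J\).

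Suppose first that \(a^n=-\sum_{i<n} r_ia^i\) with \(r_i\in J\). Then
\[
\sum_{i<n} r_it^{n-i}\equiv \sum_{i<n} r_ia^it^{n-i}(at)^{0}\cdots
\]
and more precisely \(r_it^{n-i}\equiv r_i a^i t^i t^{n-i}=r_ia^it^n\). Hence \(\sum_{i<n} r_it^{n-i}\equiv -a^nt^n\equiv -1\), so \(1\) lies in \(J[t]+R[t](1-at)\).

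Conversely, suppose \(1-\sum_{k=0}^{m+1} j_kt^k=g(t)(1-at)\) with \(g=\sum_{k=0}^m g_kt^k\). Compare coefficients. This gives \(g_0=1-j_0\), then \(g_k=g_{k-1}a-j_k\), and finally \(g_ma=j_{m+1}\). Solving the recursion gives \(g_k=a^k-\sum_{i\le k} j_ia^{k-i}\). The last equation then becomes \(a^{m+1}=\sum_{i\le m+1} j_ia^{m+1-i}\), so \(a\) is integral over \(J\).

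\emph{Key construction.} For a proper left ideal \(N\subseteq R[t]\) containing \(1-at\), set
\[
J_N=\{b\in R : bt^d\in N \text{ for some } d\ge 0\}.
\]
This is a left ideal, because \(bt^d\in N\) implies \(bt^{d'}\in N\) for all \(d'\ge d\). It satisfies \(N\cap R\subseteq J_N\). Two facts are needed.

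First, every \(f=\sum_{k\le d} c_kt^k\) satisfies \(f\equiv bt^d\) with \(b=\sum_k c_ka^{d-k}\), since \(c_kt^k\equiv c_ka^{d-k}t^d\). Consequently \(f\in N\) forces \(b\in J_N\), and so \(N\subseteq J_N[t]+R[t](1-at)\).

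Second, \(a\) is not integral over \(J_N\). Suppose instead that \(a^n=-\sum_{i<n} r_ia^i\) with \(r_i\in J_N\), and choose a common \(d\) with \(r_it^d\in N\) for all \(i\). Multiplying the computation in the Lemma by \(t^d\) gives \(t^d\equiv -\sum_i r_it^{n-i}t^d\), which lies in \(N\). Then \(1\equiv a^dt^d\in N\), contradicting properness.

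\emph{The correspondence.} Let \(P\) be a G-left ideal with witness \(a\), and put \(M_P=P[t]+R[t](1-at)\). By the Lemma, \(M_P\) is proper.

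To see that \(M_P\) is maximal, take a proper \(N\supseteq M_P\). Then \(J_N\supseteq P\) and \(a\) is not integral over \(J_N\). Since \(a\) is integral over every left ideal properly containing \(P\), this forces \(J_N=P\). The first fact above now gives \(N\subseteq P[t]+R[t](1-at)=M_P\), so \(N=M_P\).

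The same argument applied to \(N=M_P\) gives \(M_P\cap R\subseteq J_{M_P}=P\). The reverse inclusion \(P\subseteq M_P\cap R\) is obvious, so \(M_P\cap R=P\). This is the displayed formula, and it also shows that \(P\mapsto M_P\) is injective.

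For surjectivity, let \(M\) be a maximal left ideal with \(1-at\in M\), and put \(P=J_M\). The left ideal \(P[t]+R[t](1-at)\) contains \(M\), and it is proper by the Lemma together with the second fact. By maximality it equals \(M\).

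It remains to show \(P\) is a G-left ideal with witness \(a\). We already know \(a\) is not integral over \(P\). Now let \(L\supsetneq P\) and pick \(b\in L\setminus P\). Then \(b\notin M\), since otherwise \(b\in J_M=P\) with \(d=0\). Hence \(L[t]+R[t](1-at)\) strictly contains \(M\), so it is not proper. By the Lemma, \(a\) is integral over \(L\).

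The main obstacle is reducing a polynomial in \(M\) to data in \(R\). Modulo \(1-at\) one can push powers of \(t\) upward but cannot eliminate them, because \(a\) need not be invertible. The device \(J_N\), which allows a trailing factor \(t^d\), resolves this; the required degree bookkeeping uses the identity \(1\equiv a^dt^d\).
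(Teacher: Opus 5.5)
Your overall argument is sound and self-contained. The paper gives no proof of this proposition to compare against; it imports the result from \cite{aryapoor2026integral,aryapoor2026integral2}. There is, however, one unjustified step, in the converse half of your Lemma. Solving the recursion gives
\[
a^{m+1}=\sum_{i=0}^{m+1} j_i a^{m+1-i}=j_0a^{m+1}+\sum_{i=1}^{m+1} j_i a^{m+1-i}.
\]
The term $j_0a^{m+1}$ has the same degree as the left-hand side, so this is not yet a relation of the form $a^n\in\sum_{i=0}^{n-1}Ja^i$. You cannot simply cancel it, because $1-j_0$ need not be invertible. The fix takes one line. The set $\sum_{k=0}^{m+1}Ja^k$ is a left ideal, so multiplying on the left by $a$ gives
\[
a^{m+2}=\sum_{i=0}^{m+1}(aj_i)\,a^{m+1-i}\in\sum_{k=0}^{m+1}Ja^k ,
\]
with $aj_i\in J$. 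This is an integral equation of degree $m+2$. You should state this explicitly, because both the properness of $M_P$ and the final step (that $a$ is integral over every $L\supsetneq P$) use this direction of the Lemma.

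With that repair, everything else checks out:
\begin{itemize}
\item The congruence $c\equiv ca^mt^m$ is correctly justified by factoring $1-(at)^m$.
\item The saturation $J_N$ is a left ideal, satisfies $N\subseteq J_N[t]+R[t](1-at)$, and $a$ is not integral over it.
\item Maximality of $M_P$, the contraction formula $M_P\cap R=P$, injectivity and surjectivity all follow as you describe.
\end{itemize}
The device $J_N$, which allows a trailing power $t^d$, is what lets you avoid inverting $a$.

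One cosmetic point: the display containing $(at)^0\cdots$ in the first half of the Lemma is garbled and should be deleted. The line following ``more precisely'' is the correct computation.
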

	The \emph{G-radical} of a left ideal \(I\) of \(R\), denoted by \(\grad{I}\), is the intersection of all G-left ideals of \(R\) containing \(I\). 
	\begin{proposition}\label{prp:incl-grad-irad-jrad}
		For any left ideal \(I\) of \(R\), we have
		\[
		\grad{I}\subseteq \irad{I}\subseteq \jrad{I}.
		\]
	\end{proposition}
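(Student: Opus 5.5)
The plan is to prove the two inclusions separately, each by comparing the families of left ideals that are intersected. For \(\grad{I}\subseteq\irad{I}\), I will show that every integrally closed left ideal containing \(I\) is an intersection of G-left ideals containing it. For \(\irad{I}\subseteq\jrad{I}\), I will show that every maximal left ideal is integrally closed.

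\emph{Second inclusion.} Let \(M\) be a maximal left ideal. The only left ideal properly containing \(M\) is \(R\), so it suffices to check that \(R\) is not integral over \(M\). I will show that the element \(1\) is not integral over \(M\). Since \(1^i=1\) for all \(i\), a relation \(1^n\in\sum_{i=0}^{n-1}M\cdot 1^i\) would say \(1\in M\), which is impossible because \(M\) is proper. Hence \(M\) is integrally closed. So the maximal left ideals containing \(I\) form a subfamily of the integrally closed left ideals containing \(I\), and therefore \(\irad{I}\subseteq\jrad{I}\).

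\emph{First inclusion.} Let \(Q\supseteq I\) be integrally closed and let \(f\in R\setminus Q\). I will produce a G-left ideal \(P\supseteq Q\) with \(f\notin P\); this gives \(\grad{I}\subseteq Q\) for every such \(Q\), hence \(\grad{I}\subseteq\irad{I}\).
\begin{enumerate}
\item \emph{Choice of witness.} Put \(J=Q+Rf\). Then \(J\supsetneq Q\), and since \(Q\) is integrally closed, \(J\) is not integral over \(Q\). So there is \(a\in J\) that is not integral over \(Q\).
\item \emph{Zorn's lemma.} Consider the set \(\mathcal S\) of left ideals \(P\supseteq Q\) such that \(a\) is not integral over \(P\). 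It is nonempty, since \(Q\in\mathcal S\). It is closed under unions of chains: a relation \(a^n=\sum_{i<n} r_ia^i\) uses only finitely many coefficients \(r_i\), and these all lie in a single member of the chain, contradicting that member's membership in \(\mathcal S\). Zorn's lemma then gives a maximal element \(P\) of \(\mathcal S\).
\item \(P\) \emph{is a G-left ideal.} By maximality, \(a\) is integral over every left ideal strictly containing \(P\). Together with \(a\) not being integral over \(P\), this says \(P\) is a G-left ideal with witness \(a\).
\item \(f\notin P\). If \(f\in P\), then \(J=Q+Rf\subseteq P\), so \(a\in P\). But \(a\in P\) means \(a^1\in P\cdot a^0\), i.e. \(a\) is integral over \(P\) (taking \(n=1\)). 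This contradicts \(P\in\mathcal S\).
\end{enumerate}
The case \(Q=R\) is vacuous, since then there is no \(f\notin Q\).

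The only delicate point is step 1, choosing the witness \(a\). One cannot take \(a=f\) directly, because \(f\) itself might be integral over \(Q\). Passing to the left ideal \(Q+Rf\) and using the definition of integrally closed is what supplies an element that is not integral over \(Q\).
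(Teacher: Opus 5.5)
Your proof is correct and complete. The paper does not prove this proposition itself; it is one of the preliminary facts quoted from \cite{aryapoor2026integral, aryapoor2026integral2}, so there is no in-paper argument to compare yours against.

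Your argument is a self-contained proof of exactly the two facts the statement needs:
\begin{enumerate}
\item Every maximal left ideal is integrally closed, since $1$ is never integral over a proper left ideal.
\item Every integrally closed left ideal $Q$ is an intersection of G-left ideals. You obtain these by Zorn's lemma, as left ideals maximal with respect to the non-integrality of some $a\in Q+Rf$ that is not integral over $Q$.
\end{enumerate}
The key points are handled correctly: you take the witness from $Q+Rf$ rather than using $f$ itself, the chain-union step uses only finitely many coefficients, and $a\in P$ would make $a$ integral over $P$ with $n=1$.

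An alternative route, closer to this paper's toolkit, would be to take a maximal left ideal of $R[t]$ containing $Q[t]+R[t](1-at)$ and contract it via Proposition~\ref{prp:1-1corres-G-left}. That route needs an additional lemma: $a$ is not integral over $Q$ iff $Q[t]+R[t](1-at)\neq R[t]$. The lemma is true, but the paper does not state it. Your direct Zorn argument avoids it and is the more economical route.
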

	\begin{proposition}\label{prp:grad-semi-int}
		For every left ideal \(I\) of \(R\), the left ideal \(\grad{I}\) is integral over \(I\).
		
	\end{proposition}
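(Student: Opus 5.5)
We show that every element \(a\in\grad{I}\) satisfies \(a^N\in\sum_{i=0}^{N-1}Ia^i\) for some \(N\ge1\), by contradiction. Assume some \(a\in\grad{I}\) is not integral over \(I\). Consider the family \(\mathcal{F}\) of left ideals \(J\supseteq I\) such that \(a\) is not integral over \(J\). It contains \(I\), so it is nonempty, and we order it by inclusion.

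\emph{Step 1 (Zorn).} Let \(\{J_\lambda\}\) be a chain in \(\mathcal{F}\) and put \(J=\bigcup_\lambda J_\lambda\), which is a left ideal. Suppose \(a\) were integral over \(J\), say \(a^n=\sum_{i<n} r_i a^i\) with \(r_i\in J\). Only finitely many coefficients \(r_0,\dots,r_{n-1}\) occur, so they all lie in a single \(J_\lambda\). Then \(a\) would be integral over \(J_\lambda\), a contradiction. Hence \(J\in\mathcal{F}\), and Zorn's lemma gives a maximal element \(P\in\mathcal{F}\).

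\emph{Step 2 (\(P\) is a G-left ideal).} By construction \(a\) is not integral over \(P\). If \(J\supsetneq P\) is a left ideal, then \(J\supseteq I\) and \(J\notin\mathcal{F}\) by maximality, so \(a\) is integral over \(J\). Thus \(P\) is a G-left ideal with witness \(a\), and it contains \(I\).

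\emph{Step 3 (contradiction).} Since \(a\in\grad{I}\) and \(P\) is a G-left ideal containing \(I\), we get \(a\in P\). But then \(a^1\in P=Pa^0\), so \(a\) is integral over \(P\) with \(n=1\). This contradicts Step 2. Hence every element of \(\grad{I}\) is integral over \(I\).

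No step is a real obstacle. The only point needing care is Step 1: the integrality relation involves only finitely many coefficients, and this is exactly what lets the family \(\mathcal{F}\) be closed under unions of chains.
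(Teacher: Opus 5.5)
Your proof is correct. The finiteness of the coefficient set makes \(\mathcal{F}\) closed under unions of chains. A maximal member \(P\) is, by definition, a G-left ideal with witness \(a\) containing \(I\). Then \(a\in P\) gives integrality with \(n=1\), which is the contradiction.

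The paper does not prove this statement itself; it quotes it from \cite{aryapoor2026integral,aryapoor2026integral2}. Your direct Zorn argument is self-contained. One could instead go through Proposition~\ref{prp:1-1corres-G-left}: take a maximal left ideal of \(R[t]\) containing \(I[t]+R[t](1-at)\) and contract it to \(R\). That route, however, first requires showing that \(I[t]+R[t](1-at)\) is proper exactly when \(a\) is not integral over \(I\), which your approach avoids.
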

	
	\begin{proposition}\label{prp:gen-kothe-1st-equiv}
		For a ring \(R\), the following statements are equivalent.
		\begin{enumerate}[label=(\roman*)]
			\item For every left ideal \(I\) of \(R\), \(\irad{I}\) is integral over \(I\).
			\item \(\grad{I}=\irad{I}\) for every left ideal \(I\) of \(R\).
		\end{enumerate}
	\end{proposition}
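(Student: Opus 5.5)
The statement is the equivalence of (i) and (ii) in Proposition~\ref{prp:gen-kothe-1st-equiv}. I will prove the two implications separately, relying mainly on Propositions~\ref{prp:int-in-intrad}, \ref{prp:incl-grad-irad-jrad} and \ref{prp:grad-semi-int}.

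\emph{(ii)\(\Rightarrow\)(i).} Suppose \(\grad{I}=\irad{I}\) for every left ideal \(I\). By Proposition~\ref{prp:grad-semi-int}, \(\grad{I}\) is integral over \(I\). Hence \(\irad{I}=\grad{I}\) is integral over \(I\), which is (i).

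\emph{(i)\(\Rightarrow\)(ii).} Proposition~\ref{prp:incl-grad-irad-jrad} already gives \(\grad{I}\subseteq\irad{I}\), so it suffices to show \(\irad{I}\subseteq P\) for every G-left ideal \(P\supseteq I\). The plan is to show that every G-left ideal is integrally closed; then \(P\) is one of the ideals in the intersection defining \(\irad{I}\), and \(\irad{I}\subseteq P\) follows.

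So let \(P\) be a G-left ideal with witness \(a\), and suppose \(J\supsetneq P\) is a left ideal integral over \(P\). We must reach a contradiction. Since \(J\supsetneq P\), the definition of witness says \(a\) is integral over \(J\): we have \(a^m=\sum_{i<m} j_i a^i\) with \(j_i\in J\). Each \(j_i\) is integral over \(P\), but integrality is only known element-by-element, so we cannot yet conclude that \(a\) is integral over \(P\). This is the main obstacle. Hypothesis (i) resolves it as follows:
\begin{enumerate}
\item Since \(J\) is integral over \(P\), Proposition~\ref{prp:int-in-intrad} gives \(J\subseteq\irad{P}\).
\item Consider \(J'=\irad{P}\). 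It is a left ideal with \(J'\supseteq J\supsetneq P\), and by (i) it is integral over \(P\).
\item Because \(J'\supsetneq P\), the witness \(a\) is integral over \(J'\).
\end{enumerate}
We now need a transitivity principle: if \(a\) is integral over a left ideal \(J'\) and \(J'\) is integral over \(P\), then \(a\) is integral over \(P\). I would obtain this without any transitivity lemma, by applying (i) to a bigger ideal. Let \(K=J'+Ra\), and consider \(\irad{P}\).
\begin{enumerate}
\item If \(a\in\irad{P}\), then by (i) \(a\) is integral over \(P\), contradicting the choice of \(a\). So \(a\notin\irad{P}\).
\item It therefore suffices to show that \(\irad{P}\) is itself integrally closed. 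Granting this, \(a\) being integral over \(\irad{P}=J'\) means \(Ra+J'\) is integral over \(J'\). (This uses that the left ideal generated by an integral element together with an integrally closed ideal is integral over it; this is the delicate point.) Integral closedness of \(J'\) then forces \(a\in J'\), contradicting step 1.
\item That \(\irad{P}\) is integrally closed is immediate. If \(L\supseteq\irad{P}\) is integral over \(\irad{P}\), then \(L\) lies in every integrally closed ideal \(C\supseteq P\): these \(C\) contain \(\irad{P}\), and \(L+C\) is integral over \(C\).
\end{enumerate}
Both step 2 and step 3 rest on the same fact: if \(a\) is integral over a left ideal \(C\), then \(C+Ra\) is integral over \(C\). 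I expect this fact to be available from \cite{aryapoor2026integral} through the definition of the integral radical, and I would cite it there.

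If that citation is not available, the fallback is to argue directly. Take an integrally closed \(C\supseteq P\) with \(a\notin C\), which exists because \(a\notin\irad{P}\). Then \(C\supsetneq P\) unless \(C=P\).
\begin{itemize}
\item If \(C\neq P\), the witness property makes \(a\) integral over \(C\). One then reduces to the single-generator fact about \(C+Ra\) being integral over \(C\), which contradicts \(C\) being integrally closed with \(a\notin C\).
\item If \(C=P\), then \(P\) is already integrally closed, which is what we wanted.
\end{itemize}
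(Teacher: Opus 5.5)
Your argument for (ii)$\Rightarrow$(i) is correct, and your target for (i)$\Rightarrow$(ii) is the right one: showing that every G-left ideal is integrally closed. The gap is the ``single-generator fact'' on which Step~2 and the fallback both rest. That fact says: if $a$ is integral over an integrally closed left ideal $C$, then $C+Ra$ is integral over $C$. This is false. Take $R=M_2(k)$. The zero ideal is integrally closed, because every nonzero left ideal contains a nonzero idempotent, which is not nilpotent. The element $e_{12}$ is integral over $0$, since $e_{12}^2=0$. Yet $Re_{12}$ contains $e_{21}e_{12}=e_{22}$, so it is not nil. Thus an integrally closed left ideal need not contain the elements integral over it, and you cannot pass from ``$a$ is integral over $\irad{P}$'' to ``$a\in\irad{P}$''. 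Step~3 has the same flaw, since a sum $l+c$ need not be integral over $C$. Its conclusion is still true: $\irad{\irad{P}}=\irad{P}$, so Proposition~\ref{prp:int-in-intrad} applies.

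The missing ingredient is that a G-left ideal is stable under right multiplication by its witness, i.e.\ $Pa\subseteq P$. Let $M=P[t]+R[t](1-at)$, which is maximal with $M\cap R=P$ by Proposition~\ref{prp:1-1corres-G-left}. For $p\in P$ we have $t\cdot pa=p-p(1-at)\in M$. Also $t\notin M$, since otherwise $1=(1-at)+at\in M$. By maximality the left ideal $\{x: tx\in M\}$ equals $M$, so $pa\in M\cap R=P$.

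With this, integrality passes from $J$ to $Ja$: if $J$ is a left ideal integral over $P$, so is $Ja$. For $j\in J$ we have $aj\in J$, so $(aj)^n=\sum_{i<n}p_i(aj)^i$ with $p_i\in P$. Then $(ja)^{n+1}=j(aj)^na=\sum_{i<n}(jp_ia)(ja)^i$, and $jp_ia\in Pa\subseteq P$.

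Now take $J=\irad{P}$, which is integral over $P$ by (i). By the previous step $Ja$ is integral over $P$, so $Ja\subseteq J$ by Proposition~\ref{prp:int-in-intrad}. If $J\supsetneq P$, the witness property gives $a^m\in\sum_{i<m}Ja^i\subseteq J$. By (i), $a^m$ is integral over $P$, hence so is $a$, a contradiction. Therefore $\irad{P}=P$, and $\irad{I}\subseteq P$ for every G-left ideal $P\supseteq I$. The point is to place a \emph{power} of $a$ in $\irad{P}$, not $a$ itself.
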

	Recall that the upper nilradical \(\Nil^*(R)\) of \(R\) is the largest nil two-sided ideal of \(R\). 
	\begin{theorem}\label{thm:G-radical.2-sided}
		For any ring \(R\), the intersection of all the G-left ideals of \(R\) is equal to the upper nilradical \(\Nil^*(R)\).
	\end{theorem}

	\section{The one-sided Nullstellensatz}\label{sec:one-sided}
	This part provides the main results of the paper, namely the strong and explicit forms of the one-sided Nullstellensatz over arbitrary division rings.  The final subsection establishes a generalization of the one-sided Nullstellensatz for a broader class of rings than division rings. 
	
	\subsection{G-left Jacobson rings}
	In the commutative setting, a ring \(R\) is called a \emph{Jacobson ring} if every prime ideal of \(R\) is an intersection of maximal ideals; equivalently, for every prime ideal \(P\), \(\jrad{P}=P\). This definition has a direct generalization to the noncommutative setting: a ring is called \emph{Jacobson} if \(\jrad{P}=P\) for every prime two-sided ideal \(P\).  
	
	It is known that a commutative ring \(R\) is a Jacobson ring iff the Jacobson  radical of any ideal of \(R\) coincides with the G-radical of that ideal \cite[Theorem 25 and Theorem 30]{kaplansky2006commutative}. Generalizing this property to the noncommutative setting, we introduce the following
	notion.
	\begin{definition}
		A ring \(R\) is called \emph{G-left Jacobson} if  \(\grad{I}=\jrad{I}\) for every left ideal \(I\subseteq R\).
	\end{definition}
	For commutative rings, the notion of a G-left Jacobson ring coincides with that of a Jacobson ring. We record some elementary facts about G-left Jacobson rings. 
	\begin{proposition}
		Let \(R\) be  a ring. Then
		\begin{enumerate}
			\item \(R\) is G-left Jacobson iff  \(\jrad{P} = P\) for every G-left ideal \(P\).
			\item Let \(J\) be a two-sided ideal of \(R\). If \(R\) is G-left Jacobson, then so is \(R/J\).
		\end{enumerate}
	\end{proposition}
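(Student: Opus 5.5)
For part (1), the forward direction is immediate. If \(R\) is G-left Jacobson and \(P\) is a G-left ideal, then \(P\) is itself a G-left ideal containing \(P\), so \(\grad{P}=P\). Hence \(\jrad{P}=\grad{P}=P\).

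For the converse, assume \(\jrad{P}=P\) for every G-left ideal \(P\), and let \(I\) be any left ideal. By Proposition~\ref{prp:incl-grad-irad-jrad} we already have \(\grad{I}\subseteq\jrad{I}\), so only the reverse inclusion needs proof. Let \(P\supseteq I\) be a G-left ideal. Every maximal left ideal containing \(P\) also contains \(I\), so
\[
\jrad{I}\subseteq\jrad{P}=P.
\]
Intersecting over all such \(P\) gives \(\jrad{I}\subseteq\grad{I}\). There is one degenerate case: if no G-left ideal contains \(I\), the empty intersection is \(R\), and the inclusion holds trivially. Both directions use only monotonicity of \(\jrad{\cdot}\) and the definitions.

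For part (2), let \(\bar R=R/J\) and let \(\bar I\) be a left ideal of \(\bar R\). Write \(\bar I=I/J\) with \(I\supseteq J\) a left ideal of \(R\). By Proposition~\ref{prp:grad-quotient-ring}, \(\grad{I/J}=\grad{I}/J\). For the Jacobson radical, recall that the maximal left ideals of \(\bar R\) are exactly the \(M/J\) with \(M\) a maximal left ideal of \(R\) containing \(J\). Any maximal left ideal containing \(I\) automatically contains \(J\), and intersections commute with taking images modulo \(J\) for left ideals containing \(J\). Hence
\[
\jrad{I/J}=\jrad{I}/J.
\]
Since \(R\) is G-left Jacobson, \(\grad{I}=\jrad{I}\), and therefore \(\grad{\bar I}=\jrad{\bar I}\).

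No step is a real obstacle. The only points needing care are the correspondence of maximal left ideals under the quotient map, which is standard, and the empty-intersection convention. These are routine, so the proof should be short.
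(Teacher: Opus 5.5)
Your proof is correct and takes the same approach as the paper. Part (1) is the routine monotonicity argument that the paper simply calls easy. Part (2) combines Proposition~\ref{prp:grad-quotient-ring} with the standard identity \(\jrad{I/J}=\jrad{I}/J\), which you make explicit and the paper leaves implicit.
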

	\begin{proof}
		The proof of the first statement is easy, and the second statement follows from Proposition~\ref{prp:grad-quotient-ring}. 
	\end{proof}
	\begin{proposition}\label{prp:irad-G-left}
		Let \(R\) be a G-left Jacobson ring. Then, for any left ideal \(I\subset R\), we have
		\[ \grad{I}=\irad{I}=\jrad{I}.\]
	\end{proposition}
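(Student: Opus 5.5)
This is a sandwich argument. By Proposition~\ref{prp:incl-grad-irad-jrad}, for every left ideal \(I\) of any ring \(R\) we already have the chain
\[
\grad{I}\subseteq \irad{I}\subseteq \jrad{I}.
\]
When \(R\) is G-left Jacobson, the definition gives \(\grad{I}=\jrad{I}\). Hence the two outer terms of the chain coincide, which squeezes the middle term: \(\irad{I}\) must equal both. This gives \(\grad{I}=\irad{I}=\jrad{I}\), as claimed.

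The key steps, in order, are: invoke Proposition~\ref{prp:incl-grad-irad-jrad} for the two inclusions; use the defining property of a G-left Jacobson ring to identify the endpoints; conclude by antisymmetry of inclusion.

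There is no real obstacle, since all the substantive content sits in Proposition~\ref{prp:incl-grad-irad-jrad}, which we are allowed to assume. The only point worth recording is a consequence, via Proposition~\ref{prp:grad-semi-int}. In a G-left Jacobson ring, \(\jrad{I}=\grad{I}\) is integral over \(I\). Equivalently, condition (i) of Proposition~\ref{prp:gen-kothe-1st-equiv} holds. This is presumably the form that will be used later to derive the explicit Nullstellensatz.
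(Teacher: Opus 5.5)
Your proof is correct and is the same sandwich argument as the paper's, which combines the chain \(\grad{I}\subseteq\irad{I}\subseteq\jrad{I}\) of Proposition~\ref{prp:incl-grad-irad-jrad} with \(\grad{I}=\jrad{I}\). Your side remark that \(\jrad{I}\) is then integral over \(I\) is also correct, and it is exactly the forward direction of Proposition~\ref{prp:equiv-G-left}.
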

	\begin{proof}
		The equalities follow from Proposition~\ref{prp:incl-grad-irad-jrad}. 
		
	\end{proof}
	
	\begin{proposition}\label{prp:equiv-G-left}
		A ring \(R\) is a G-left Jacobson ring iff  \(\jrad{I}\) is integral over \(I\) for all left ideals \(I\) of \(R\).  
	\end{proposition}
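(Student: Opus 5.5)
The plan is to prove the two implications separately, relying on Proposition~\ref{prp:grad-semi-int}, Proposition~\ref{prp:int-in-intrad} and Proposition~\ref{prp:incl-grad-irad-jrad}.

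For the forward direction, assume \(R\) is G-left Jacobson and let \(I\) be a left ideal of \(R\). By definition \(\jrad{I}=\grad{I}\). Proposition~\ref{prp:grad-semi-int} says that \(\grad{I}\) is integral over \(I\). Hence \(\jrad{I}\) is integral over \(I\), and nothing more is needed.

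For the converse, assume that \(\jrad{I}\) is integral over \(I\) for every left ideal \(I\).
\begin{enumerate}
\item By Proposition~\ref{prp:int-in-intrad}, applied with \(J=\jrad{I}\), we get \(\jrad{I}\subseteq\irad{I}\).
\item Proposition~\ref{prp:incl-grad-irad-jrad} gives the reverse inclusion, so \(\irad{I}=\jrad{I}\).
\item Consequently \(\irad{I}\) is integral over \(I\) for every left ideal \(I\). This is condition (i) of Proposition~\ref{prp:gen-kothe-1st-equiv}.
\item That proposition then yields \(\grad{I}=\irad{I}\) for every \(I\).
\item Combining the two equalities gives \(\grad{I}=\jrad{I}\) for all \(I\), so \(R\) is G-left Jacobson.
\end{enumerate}

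I expect no real obstacle, since every step is a direct appeal to a stated preliminary. The substantive content is concentrated in Proposition~\ref{prp:gen-kothe-1st-equiv} and Proposition~\ref{prp:grad-semi-int}, which we take as given. The only point to watch is that Proposition~\ref{prp:gen-kothe-1st-equiv} needs its hypothesis for all left ideals \(I\) simultaneously. This is exactly what our assumption supplies, because the equality \(\irad{I}=\jrad{I}\) is established for each \(I\) separately before the proposition is invoked.
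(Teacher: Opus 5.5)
Your proof is correct and matches the paper's argument step for step. The forward direction uses Proposition~\ref{prp:grad-semi-int}. The converse combines Propositions~\ref{prp:int-in-intrad} and~\ref{prp:incl-grad-irad-jrad} to obtain $\irad{I}=\jrad{I}$, and then invokes Proposition~\ref{prp:gen-kothe-1st-equiv} to get $\grad{I}=\irad{I}$.
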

	\begin{proof}
		Suppose \(R\) is G-left Jacobson. By Proposition~\ref{prp:grad-semi-int}, \(\grad{I}\) is integral over \(I\) for any left ideal \(I\subseteq R\). Since  \(\jrad{I}=\grad{I}\), the result follows. 
		
		Conversely, let \(I\) be a left ideal of \(R\). Since \(\jrad{I}\) is integral over \(I\), we have \(\jrad{I}\subseteq \irad{I}\) by Proposition~\ref{prp:int-in-intrad}. The reverse inclusion holds for all rings by Proposition~\ref{prp:incl-grad-irad-jrad}. Therefore, \(\jrad{I} = \irad{I}\). In particular, \(\irad{I}\) is integral over \(I\). Proposition~\ref{prp:gen-kothe-1st-equiv} gives \(\grad{I} = \irad{I}\). We conclude that \(\grad{I} = \irad{I} = \jrad{I}\), which completes the proof.  
	\end{proof}
	As an application, we give the following result. 
	\begin{corollary}\label{cor:ex-G-left}
		Let \(R\) be a ring satisfying one of the following properties:
		\begin{enumerate}
			\item \(R\) is left artinian.
			\item \(R\) is an algebraic algebra over a field \(k\).
			\item \(R\) is an algebra over a field \(k\) with \(\dim_k R < |k|\).
		\end{enumerate} 
		Then \(R\) is G-left Jacobson.  
	\end{corollary}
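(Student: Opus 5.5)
The plan is to combine Theorem~\ref{thm:rad-artinian-algebra} with the characterization in Proposition~\ref{prp:equiv-G-left}. That proposition says \(R\) is G-left Jacobson if and only if \(\jrad{I}\) is integral over \(I\) for every left ideal \(I\) of \(R\). Theorem~\ref{thm:rad-artinian-algebra} supplies exactly this integrality under each of the three listed hypotheses. The corollary should therefore follow by chaining the two results, with no case-by-case work needed beyond checking that the hypotheses match.

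Concretely, I would take an arbitrary left ideal \(I\subseteq R\) and consider the three cases: \(R\) left artinian, \(R\) an algebraic \(k\)-algebra, or \(R\) a \(k\)-algebra with \(\dim_k R<|k|\). In each case the hypotheses are literally those of Theorem~\ref{thm:rad-artinian-algebra}, so that theorem gives that \(\jrad{I}\) is integral over \(I\). Since \(I\) was arbitrary, the condition of Proposition~\ref{prp:equiv-G-left} holds, and \(R\) is G-left Jacobson. As a by-product, Proposition~\ref{prp:irad-G-left} then also gives \(\grad{I}=\irad{I}=\jrad{I}\) for every left ideal \(I\).

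There is essentially no obstacle here. All the substantive content, namely the integrality of the Jacobson radical over \(I\) in these three classes of rings, is contained in Theorem~\ref{thm:rad-artinian-algebra}, which is imported from earlier work. The only point to verify is that the hypotheses of the corollary match those of the theorem verbatim, and they do.
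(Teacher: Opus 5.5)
Your proposal is correct and is exactly the paper's proof: Theorem~\ref{thm:rad-artinian-algebra} gives that \(\jrad{I}\) is integral over \(I\) for every left ideal \(I\) under each of the three hypotheses, and Proposition~\ref{prp:equiv-G-left} then shows that \(R\) is G-left Jacobson.
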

	\begin{proof}
		By Theorem~\ref{thm:rad-artinian-algebra},  \(\jrad{I}\) is integral over \(I\) for every left ideal \(I\) of \(R\). The result now follows from  Proposition~\ref{prp:equiv-G-left}.
	\end{proof}
	
	A result of Watters states that if a ring \(R\) is a Jacobson ring (in the two-sided sense), then \(R[x]\) is also Jacobson \cite{watters1975polynomial}.  The following example demonstrates that this property does not hold for G-left Jacobson rings.
	\begin{example}
		In \cite{SmoktunowiczPuczylowski2001}, Smoktunowicz and Puczy{\l}owski constructed a nil algebra \(A\)
		over a countable field \(k\) such that \(A[x]\) is Jacobson radical but
		is not nil. Let
		\(
		R=k1\oplus A
		\)
		be its unitization. Since every element outside \(A\) is invertible,
		\(A\) is the unique maximal left ideal of \(R\). Moreover,
		\(\operatorname{Nil}^{*}(R)=A\). It follows from Theorem~\ref{thm:G-radical.2-sided} and Proposition~\ref{prp:incl-grad-irad-jrad} that for every proper left
		ideal \(I\subset R\),
		\[
		A=\operatorname{Nil}^{*}(R)
		=\grad{(0)}
		\subseteq\grad{I}
		\subseteq\jrad{I}
		=A.
		\]
		Thus \(R\) is G-left Jacobson.
		
		On the other hand, \(A[x]\) is a Jacobson radical two-sided
		ideal of \(R[x]\), and
		\[
		R[x]/A[x]\cong k[x]
		\]
		has zero Jacobson radical. Consequently, \(\jrad{(0)}\) coincides with \(A[x]\),
		which is not nil. Since the G-radical of the zero ideal is the upper
		nilradical by Theorem~\ref{thm:G-radical.2-sided}, the radicals computed in \(R[x]\) satisfy
		\[
		\grad{(0)}
		=\operatorname{Nil}^{*}(R[x])
		\subsetneq A[x] = \jrad{(0)}.
		\]
		Therefore, \(R[x]\) is not a G-left Jacobson ring, while \(R\) is. 
	\end{example}
	
	\subsection{The one-sided Nullstellensatz over division rings}\label{subsec:proofs}
	In what follows, let \(D\) be an arbitrary division ring, and let \(D[x_1,\dots,x_n]\) denote the polynomial ring over \(D\) in \(n\) central indeterminates. 
	
	\begin{theorem}\label{thm:divsion-G-left}
		Every \(G\)-left ideal of \(D[x_1,\dots,x_n]\) is a finite intersection
		of maximal left ideals. In particular, \(D[x_1,\dots,x_n]\) is
		G-left Jacobson.
	\end{theorem}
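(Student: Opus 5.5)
Let \(R=D[x_1,\dots,x_n]\) and let \(P\subset R\) be a G-left ideal with witness \(a\in R\). By Proposition~\ref{prp:1-1corres-G-left}, \(M:=P[t]+R[t](1-at)\) is a maximal left ideal of \(R[t]=D[x_1,\dots,x_n,t]\) and \(M\cap R=P\). The ring \(R[t]\) is again a polynomial ring over \(D\) in \(n+1\) central indeterminates. The Amitsur--Small theorem (weak form of the one-sided Nullstellensatz) therefore gives that \(V:=R[t]/M\) is finite dimensional as a left \(D\)-vector space, say \(\dim_D V=d\).

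Next I will pull this finiteness back to \(R\). The composite \(R\hookrightarrow R[t]\to R[t]/M\) is a homomorphism of left \(R\)-modules with kernel \(M\cap R=P\). Hence \(R/P\) embeds as a left \(R\)-submodule of \(V\), and \(\dim_D R/P\le d<\infty\).

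Now \(R/P\) is a left \(R\)-module that is finite dimensional over \(D\subseteq R\), so it has finite length as an \(R\)-module. The goal is to write \(P\) as a finite intersection of maximal left ideals, which amounts to showing that \(R/P\) is a semisimple (cyclic) module: then \(R/P\cong\bigoplus_{i=1}^m S_i\), and \(P\) is the intersection of the kernels of the projections onto the simple summands, each of which is a maximal left ideal. To prove semisimplicity I plan to use integrality. Suppose \(R/P\) is not semisimple. Then its radical (the intersection of the maximal submodules, which is \(\jrad{P}/P\)) is nonzero, so \(P\subsetneq\jrad{P}\). I will show this contradicts the witness property. Since \(\jrad{P}\supsetneq P\), the element \(a\) is integral over \(\jrad{P}\), while it is not integral over \(P\).

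Here the finite dimensionality helps. Consider \(\bar R:=R/\operatorname{ann}_R(R/P)\). This ring embeds in \(\operatorname{End}_D(R/P)\), a matrix ring over a division ring, and is finite dimensional over the centre-related field \(F=Z(D)(\text{images of }x_i)\). More cleanly, \(\bar R\) acts faithfully on a module of finite length, so \(\bar R\) is left artinian. Indeed, \(\bar R\) embeds as a left module into \((R/P)^d\) via \(r\mapsto(rv_1,\dots,rv_d)\) for a \(D\)-basis \(v_1,\dots,v_d\), and \((R/P)^d\) has finite length.

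By Corollary~\ref{cor:ex-G-left}, \(\bar R\) is G-left Jacobson. By Proposition~\ref{prp:grad-quotient-ring} and the definition, the property that \(P\) is G-left with witness \(a\) descends to \(\bar P=P/\operatorname{ann}\), since \(\operatorname{ann}\subseteq P\) and integrality over left ideals containing the two-sided ideal is preserved by passing to the quotient. So \(\bar P\) is a G-left ideal of \(\bar R\). Then \(\jrad{\bar P}=\grad{\bar P}=\bar P\). Since \(\bar R\) is left artinian, \(\bar R/\bar P\) has zero radical and finite length, so it is semisimple, and \(\bar P\) is a finite intersection of maximal left ideals. These lift to maximal left ideals of \(R\) containing \(\operatorname{ann}\).

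The main obstacle is verifying carefully that \(\operatorname{ann}_R(R/P)\subseteq P\) and that maximal left ideals of \(\bar R\) pull back to maximal left ideals of \(R\); both are routine, since the annihilator kills \(1+P\). The ``in particular'' then follows: every G-left ideal \(P\) is an intersection of maximal left ideals, so \(\jrad{P}=P\), and by part~1 of the elementary proposition on G-left Jacobson rings, \(R\) is G-left Jacobson.
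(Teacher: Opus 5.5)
There is a genuine gap at the step where you assert that \(\bar R=R/\operatorname{ann}_R(R/P)\) is left artinian.

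The map \(r\mapsto(rv_1,\dots,rv_d)\) has kernel \(\bigcap_i\operatorname{ann}_R(v_i)\), not \(\operatorname{ann}_R(R/P)\). Because \(D\) is not central in \(R\), \(rv_i=0\) does not imply \(r(\delta v_i)=0\) for \(\delta\in D\). For \(d=1\) and \(v_1=1+P\), the kernel is \(P\) itself, which need not be two-sided.

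The claim itself is false, not merely badly justified. Take \(c\in D\) transcendental over \(Z(D)\); for instance, \(D\) could be the quotient division ring of the first Weyl algebra over \(\mathbb{Q}\), whose centre is \(\mathbb{Q}\). Let \(P=R(x-c)\subset R=D[x]\).
\begin{enumerate}
\item \(P\) is maximal, hence a G-left ideal with witness \(1\), and \(\dim_D R/P=1\).
\item A nonzero two-sided ideal of \(D[x]\) contains a nonzero monic \(g\in Z(D)[x]\): take a monic element of minimal degree and compare it with \(\delta g\delta^{-1}\).
\item Since \(g-g(c)\in R(x-c)\), such a \(g\) lies in \(P\) only if \(g(c)=0\), which transcendence forbids.
\end{enumerate}
Hence \(\operatorname{ann}_R(R/P)=0\) and \(\bar R=D[x]\), which is not left artinian. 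Your remark about finite dimensionality over \(Z(D)(\bar x_1,\dots,\bar x_n)\) is likewise false here.

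So Corollary~\ref{cor:ex-G-left} is not available, and nothing else in your argument yields \(\jrad{P}=P\). The abandoned idea that \(P\subsetneq\jrad{P}\) contradicts the witness property would need \(\jrad{P}\) to be integral over \(P\), together with a transitivity result for integrality. You have neither.

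Your first steps are correct: maximality of \(M\), Amitsur--Small, and \(R/P\hookrightarrow V\). The missing semisimplicity comes from the central variable \(t\), not from \(\bar R\).
\begin{enumerate}
\item Since \(V\) is finite dimensional over \(D\), it has finite length over \(R\), so it contains a simple \(R\)-submodule \(W\).
\item Multiplication by \(t^j\) is \(R\)-linear because \(t\) is central, so each \(t^jW\) is zero or simple.
\item \(\sum_{j\ge0}t^jW\) is a nonzero \(R[t]\)-submodule of the simple \(R[t]\)-module \(V\), hence equals \(V\).
\end{enumerate}
Thus \(V\) is a semisimple \(R\)-module, and so is its cyclic submodule \(R/P\). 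The decomposition argument in your third paragraph then finishes the proof.

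This is exactly the argument the paper gives for Theorem~\ref{thm:fully-bounded-G-left}. For the present theorem, the paper instead cites Robson's Proposition~5.3: the contraction of a maximal left ideal of \(D[x_1,\dots,x_n,t]\) to \(D[x_1,\dots,x_n]\) is a finite intersection of maximal left ideals.
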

	
	\begin{proof}
		Let \(P\) be a \(G\)-left ideal of \(R=D[x_1,\dots,x_n]\) with witness \(a\).
		By Proposition~\ref{prp:1-1corres-G-left}, the left ideal 
		\[
		M=P[t]+R[t](1-at)
		\]
		is a maximal left ideal of \(R[t]\), where \(t\) is central,
		and \(M\cap R=P\). Note that 
		\(R[t]=D[x_1,\dots,x_n,t]\). 
		By \cite[Proposition 5.3]{robson1981liberal}, 
		\[
		M\cap D[x_1,\dots,x_n] = M\cap R = P
		\]
		is a finite intersection of maximal left ideals of \(R\). In particular, \(\jrad{P}=P\). This completes the proof. 
	\end{proof}
	We can now prove Theorem~\ref{thm:stong-null-D} and Theorem~\ref{thm:explicit-null-D}.
	\begin{proof}[Proof of  Theorem~\ref{thm:stong-null-D}] 
		By Theorem~\ref{thm:divsion-G-left}, \(D[x_1,\dots,x_n]\) is G-left Jacobson, and consequently, \(\jrad{I} = \grad{I}\) for every left ideal \(I\) of \(D[x_1,\dots,x_n]\). This gives the desired equality in Theorem~\ref{thm:stong-null-D}.  
	\end{proof}
	\begin{proof}[Proof of  Theorem~\ref{thm:explicit-null-D}] 
		Let \(I\subseteq D[x_1,\dots,x_n]\) be a left ideal. By Proposition~\ref{prp:equiv-G-left}, \(\jrad{I}\) is  integral over \(I\). This proves the inclusion
		\[
		\bigcap_{\substack{I\subseteq M\\M: \text{maximal left ideal}}}M  \subseteq \{f\in R\mid (\forall g\in R)(\exists N\geq1)\quad (gf)^N\in\sum_{i=0}^{N-1}I(gf)^i\}.
		\]
		To prove the reverse inclusion, we note that if \(f\in D[x_1,\dots,x_n]\) belongs to the right-hand side set, then the left ideal \(D[x_1,\dots,x_n]f\) is integral over \(I\).  Proposition~\ref{prp:incl-grad-irad-jrad} gives
		\(D[x_1,\dots,x_n]f\subseteq \irad{I}\). By Theorem~\ref{thm:divsion-G-left}, \(D[x_1,\dots,x_n]\) is G-left Jacobson. It follows from  Proposition~\ref{prp:irad-G-left} that \(\jrad{I} = \irad{I}\). Therefore \(f\in \jrad{I}\), which completes the proof. 
	\end{proof}
	In \cite{chapman2025amitsur}, Chapman and Paran called a division ring \emph{Amitsur--Small} if for all \(n\ge m\geq 1\), the  contraction to \(D[x_1,\dots,x_{m}]\) of every maximal left ideal of \(D[x_1,\dots,x_n]\) is a maximal left ideal.  For such division rings, we have the following stronger result. 
	\begin{proposition}
		Let \(D\) be an Amitsur--Small division ring. Then every G-left ideal of \(D[x_1,\dots,x_n]\) is a maximal left ideal. 
	\end{proposition}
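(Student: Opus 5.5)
The plan is to repeat the proof of Theorem~\ref{thm:divsion-G-left}, but to replace the appeal to \cite[Proposition 5.3]{robson1981liberal} with the Amitsur--Small hypothesis, which gives a sharper conclusion. Let \(P\) be a G-left ideal of \(R=D[x_1,\dots,x_n]\) with witness \(a\in R\). By Proposition~\ref{prp:1-1corres-G-left}, the left ideal
\[
M=P[t]+R[t](1-at)
\]
is a maximal left ideal of \(R[t]\), and \(M\cap R=P\). Since \(t\) is central, we have \(R[t]=D[x_1,\dots,x_n,t]\), which is a polynomial ring over \(D\) in \(n+1\) central indeterminates. After renaming \(t\) as \(x_{n+1}\), \(M\) becomes a maximal left ideal of \(D[x_1,\dots,x_{n+1}]\).

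We then apply the definition of an Amitsur--Small division ring with \(n+1\) in place of \(n\) and with \(m=n\); the condition \(n+1\ge n\ge 1\) holds. This says that the contraction \(M\cap D[x_1,\dots,x_n]\) is a maximal left ideal of \(D[x_1,\dots,x_n]\). That contraction is exactly \(M\cap R=P\), so \(P\) is maximal.

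The main point to check is that this contraction really is the one meant in the definition. We view \(D[x_1,\dots,x_n]\) as the subring of \(D[x_1,\dots,x_{n+1}]\) consisting of polynomials that do not involve \(x_{n+1}\), and this is exactly the subring \(R\subseteq R[t]\). The equality \(M\cap R=P\) is already supplied by Proposition~\ref{prp:1-1corres-G-left}, so nothing further needs proving.

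We should also confirm that \(P\) is proper. This holds because a G-left ideal with witness \(a\) cannot equal \(R\), since every element is integral over \(R\). It also follows from the fact that a maximal left ideal of \(D[x_1,\dots,x_n]\) is proper by definition.

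The edge case \(n=0\) is trivial if it is allowed. The only left ideals of \(D\) are \(0\) and \(D\), so the only G-left ideal is \(0\), and \(0\) is maximal in \(D\). The statement presumably intends \(n\ge1\) in any case, so no real obstacle remains.
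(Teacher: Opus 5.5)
Your proposal is correct and follows the paper's proof exactly. The paper likewise realizes \(P\) as the contraction \(M\cap R\) of the maximal left ideal \(M=P[t]+R[t](1-at)\) of \(D[x_1,\dots,x_n,t]\) via Proposition~\ref{prp:1-1corres-G-left}, and then applies the Amitsur--Small condition with \(n+1\) variables and \(m=n\). Your remarks on properness and on the \(n=0\) case are harmless additions.
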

	
	\begin{proof} 
		We just note that in the proof of Theorem~\ref{thm:divsion-G-left}, the G-left ideal \(P\) is a contraction of a maximal left ideal of \(D[x_1,\dots,x_n,t]\), from which the result follows.  
	\end{proof}
	
	The following example shows that Theorem~\ref{thm:strong-null-H-comp-prime}  and Theorem~\ref{thm:strong-null-D-prime-semprime} do not hold for arbitrary division rings. 
	\begin{example}\label{exm:counter-com-prim}
		In \cite[Section 10]{lam2004wedderburn}, Lam and Leroy constructed a division ring \(K\) with a monic quadratic polynomial \(f(x)\in R = K[x]\) satisfying the following properties: 
		\begin{enumerate}
			\item \(f(x)\) has a unique factorization 
			\[
			f(x) = (x-c)(x-b)
			\]
			into monic irreducible polynomials in \(K[x]\).
			\item The elements \(c,b\in K\) are nonconjugate, or equivalently, the left \(R\)-modules \(R/R(x-c)\) and \(R/R(x-b)\) are not isomorphic.  
		\end{enumerate}
		Note that \(K[x]\) is a principal (left and right) ideal domain. 
		By \cite[Proposition 3.3]{aryapoor2025prime}, the above properties imply that the left ideal \(I = Rf\) is completely prime.  Proposition 3.7 in \cite{aryapoor2025prime} states that any completely prime left ideal of a principal ideal domain is prime. So \(I\) is prime. The first property above implies that there exists exactly one maximal left ideal containing \(I\), namely \(R(x-b)\).  Therefore, \(\jrad{I} = R(x-b)\). As \(I\) is both prime (hence semiprime) and completely prime,  we conclude that Theorem~\ref{thm:strong-null-H-comp-prime}  and Theorem~\ref{thm:strong-null-D-prime-semprime} are not valid for the division ring \(K\) and \(n=1\).

	\end{example}
	\subsection{The one-sided Nullstellensatz for fully bounded noetherian Jacobson rings}\label{subsec:fully}
	The work of Resco, Stafford and Warfield \cite{resco1986fully} allows us to extend the
	one-sided Nullstellensatz from division rings to a broader class
	of coefficient rings. We need some preliminaries. For more details, see \cite[Section 9]{goodearl2004introduction}. 
	
	Recall that a left ideal of a ring is essential if it has nonzero
	intersection with every nonzero left ideal. A ring is left bounded
	if every essential left ideal contains a nonzero two-sided ideal.
	A ring is left fully bounded if each of its prime factor rings is
	left bounded. The right-handed notions are defined analogously.
	A ring is fully bounded if it is both left and right fully bounded. A ring is noetherian if it is both left and right noetherian. 
	\begin{theorem}\label{thm:fully-bounded-G-left}
		Let \(R\) be a fully bounded, noetherian Jacobson
		ring. Then, every \(G\)-left ideal
		of \(R[x_1,\dots,x_n]\), with central indeterminates, is a finite
		intersection of maximal left ideals. In particular,
		\(R[x_1,\dots,x_n]\) is \(G\)-left Jacobson.
	\end{theorem}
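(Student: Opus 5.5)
The proof will follow the proof of Theorem~\ref{thm:divsion-G-left} step by step. The only change is that the input from Robson \cite[Proposition 5.3]{robson1981liberal} about division rings is replaced by the corresponding result of Resco, Stafford and Warfield \cite{resco1986fully} for fully bounded noetherian Jacobson coefficient rings.

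Set \(S=R[x_1,\dots,x_n]\), and let \(P\) be a G-left ideal of \(S\) with witness \(a\in S\). By Proposition~\ref{prp:1-1corres-G-left}, applied to the ring \(S\), the left ideal
\[
M=P[t]+S[t](1-at)
\]
is a maximal left ideal of \(S[t]=R[x_1,\dots,x_n,t]\), where \(t\) is a new central indeterminate. The same proposition gives \(M\cap S=P\). So \(P\) is the contraction to \(R[x_1,\dots,x_n]\) of a maximal left ideal of the polynomial ring \(R[x_1,\dots,x_n,t]\) in \(n+1\) central variables.

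The key step is a contraction statement. For a fully bounded noetherian Jacobson ring \(R\), the contraction of a maximal left ideal of \(R[x_1,\dots,x_{n+1}]\) to \(R[x_1,\dots,x_n]\) should be a finite intersection of maximal left ideals.

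I would derive this from \cite{resco1986fully} as follows.
\begin{enumerate}
\item The theorem of Resco--Stafford--Warfield says that polynomial rings over such \(R\) are again fully bounded noetherian Jacobson. Moreover, for a maximal left ideal \(M\) of \(T=R[x_1,\dots,x_{n+1}]\), the module \(T/M\) is simple, and its annihilator \(Q\) is a maximal two-sided ideal with \(T/Q\) simple artinian. This is the one-sided Nullstellensatz for fully bounded Jacobson rings.
\item Let \(Q'=Q\cap R[x_1,\dots,x_n]\). Restricting \(T/M\) to \(R[x_1,\dots,x_n]\), the module is annihilated by \(Q'\). By the results of \cite{resco1986fully}, the ring \(R[x_1,\dots,x_n]/Q'\) is a prime ring over which \(T/Q\) is finitely generated as a left module, in the Jacobson/generic-flatness sense.
\item It follows that \(R[x_1,\dots,x_n]/Q'\) is simple artinian, being a prime Jacobson ring that is "integral" under an artinian ring.
\item Then \(R[x_1,\dots,x_n]/(M\cap R[x_1,\dots,x_n])\) embeds in \(T/M\). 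It is therefore a finitely generated module, of finite length, over the artinian ring \(R[x_1,\dots,x_n]/Q'\). A left ideal with artinian semisimple quotient module is a finite intersection of maximal left ideals.
\end{enumerate}

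Granting the key step, \(P=M\cap S\) is a finite intersection of maximal left ideals, so \(\jrad{P}=P\). Part (1) of the elementary proposition on G-left Jacobson rings then shows that \(S\) is G-left Jacobson.

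The main obstacle is the contraction claim in the middle of the argument, in particular checking that \(T/Q\) is finite over the contracted prime quotient. This is exactly where full boundedness and the Jacobson hypothesis enter. I would first try to locate a direct statement in \cite{resco1986fully} that for maximal \(Q\) in \(R[x_1,\dots,x_{n+1}]\), the contraction is maximal and \(T/Q\) is finite over it. If no such direct statement is available, I would fall back on induction on the number of variables, adjoining one variable at a time.
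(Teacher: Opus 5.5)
\textbf{There is a genuine gap.} Your reduction matches the paper's: \(P=M\cap S\), with \(M=P[t]+S[t](1-at)\) a maximal left ideal of \(S[t]=R[x_1,\dots,x_n,t]\). Your Step~4 would also finish the proof \emph{if} \(S/Q'\) were known to be simple artinian. But that contraction claim is the whole content of the theorem, and Steps 1--3 do not establish it.
\begin{itemize}
\item Step~1 assumes that \(R[x_1,\dots,x_{n+1}]\) is again fully bounded, which you need to get \(T/Q\) simple artinian. This is not among the hypotheses and would itself require proof.
\item Step~2 appeals to unspecified ``results of'' Resco--Stafford--Warfield for finiteness of \(T/Q\) over \(S/Q'\).
\item Step~3 rests on a principle (``a prime Jacobson ring integral under an artinian ring is simple artinian'') that is not a statement you can apply as written.
\end{itemize}
So the proposal locates the difficulty but leaves it unresolved. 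Passing to the two-sided ideals \(Q\) and \(Q'\) also adds an obligation, artinianity of \(S/Q'\), that the result does not need.

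The missing ingredient is the precise form of the Resco--Stafford--Warfield Nullstellensatz that the paper uses, \cite[Corollary 17.11]{goodearl2004introduction}: every simple left module over \(R[x_1,\dots,x_n,t]\) is a finitely generated semisimple left \(R\)-module. With it the argument runs as follows.
\begin{itemize}
\item Apply it to \(V=S[t]/M\). Then \(V\) has finite length over \(R\), hence over \(S\supseteq R\), so it contains a simple \(S\)-submodule \(W\).
\item Because \(t\) is central, each \(t^jW\) is an \(S\)-submodule that is zero or simple, being an image of \(W\).
\item The sum \(\sum_{j\ge 0}t^jW\) is a nonzero \(S[t]\)-submodule, hence equals \(V\). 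Thus \(V\) is semisimple of finite length over \(S\), say \(V=V_1\oplus\cdots\oplus V_m\).
\item Write \(1+M=v_1+\cdots+v_m\) with \(v_j\in V_j\). Since \(sv_j\in V_j\) for \(s\in S\), you get \(P=\operatorname{ann}_S(1+M)=\bigcap_j\operatorname{ann}_S(v_j)\).
\item Each \(\operatorname{ann}_S(v_j)\) is \(S\) if \(v_j=0\), and otherwise a maximal left ideal because \(Sv_j=V_j\) is simple.
\end{itemize}
This argument stays entirely at the level of modules. The semisimplicity your Step~4 extracts from \(S/Q'\) being simple artinian comes here from the central-translate trick. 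No full boundedness of the polynomial ring is required.
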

	
	\begin{proof}
		Set \(B=R[x_1,\dots,x_n]\), and let \(P\) be a \(G\)-left ideal
		of \(B\) with witness \(a\in B\). By Proposition~\ref{prp:1-1corres-G-left},
		\[
		M=P[t]+B[t](1-at)
		\]
		is a maximal left ideal of \(B[t]\), where \(t\) is central, and
		\(
		M\cap B=P.
		\)
		Thus \(V=B[t]/M\) is a simple left \(B[t]\)-module. Since \(B[t]=R[x_1,\dots,x_n,t]\), it follows from \cite[Corollary 17.11]{goodearl2004introduction} that \(V\) is
		a finitely generated semisimple
		left \(R\)-module. In particular, \(V\) is of finite length as a left \(B\)-module. It follows that \(V\) contains a simple \(B\)-submodule \(W\). Then 
		\[
		\sum_{j\ge 0} t^jW
		\]
		is a nonzero \(B[t]\)-submodule of \(V\). Since \(V\) is simple as a \(B[t]\)-module, 
		\[
		V = \sum_{j\ge 0} t^jW.
		\]
		Therefore, \(V\) is semisimple as a \(B\)-module. Since \(V\) is also  of finite length as a left \(B\)-module, we can write
		\[
		V = V_1\oplus\cdots\oplus V_m,
		\]
		where each \(V_i\) is a simple \(B\)-submodule of \(V\). Writing \(1+M\in V\) as a sum \(\sum_j v_j\), where \(v_j\in V_j\), we see that 
		\[
		M = \operatorname{ann}_{B[t]}(1+M) = \bigcap_{j=1}^m  \operatorname{ann}_{B[t]}(v_j). 
		\]
		Since \(M\cap R = P\) and each \(\operatorname{ann}_{B[t]}(v_j)\cap R\) is either \(B\) or a maximal left ideal of \(B\), the result follows. 
	\end{proof}
	
	As an example, any polynomial ring \(R[x_1,\dots,x_n]\) over an artinian ring \(R\) is G-left Jacobson. 
	
	\bibliographystyle{plain}
	\bibliography{references}

\end{document}